\numberwithin{equation}{section}
\newcommand{\pd}[2]{\frac {\partial #1}{\partial #2}}
\newcommand{\al}{\alpha}
\newcommand{\bb}{\beta}
\newcommand{\la}{\lambda}
\newcommand{\La}{\Lambda}
\newcommand{\oo}{\omega}
\newcommand{\Om}{\Omega}
\newcommand{\dd}{\delta}
\newcommand{\Na}{\nabla}
\newcommand{\ee}{\epsilon}
\newcommand{\beq}{\begin{equation}}
\newcommand{\eeq}{\end{equation}}
\newcommand{\beqs}{\begin{eqnarray*}}
\newcommand{\eeqs}{\end{eqnarray*}}
\newcommand{\beqn}{\begin{eqnarray}}
\newcommand{\eeqn}{\end{eqnarray}}
\newcommand{\beqa}{\begin{array}}
\newcommand{\eeqa}{\end{array}}
\def\td{\tilde}
\def\p{\partial}
\def\ZZ{{\mathbb Z}}
\def\QQ{{\mathbb Q}}
\def\RR{{\mathbb R}}
\def\PP{{\mathbb P}}
\def\CC{{\mathbb C}}
\def\pbp{\frac {\sqrt{-1}}2 \partial\bar\partial}
\def\pbps{  \sqrt{-1}\partial\bar\partial}
\def\osc{{\rm osc\,}}
\def\cH{{\mathcal H}}
\def\cS{{\mathcal S}}
\def\cU{{\mathcal U}}
\def\i{{\sqrt{-1}}}
\def\Aut{{\rm Aut}}
\newtheorem{prop}{Proposition}[section]
\newtheorem{theo}[prop]{Theorem}
\newtheorem{lem}[prop]{Lemma}
\newtheorem{cor}[prop]{Corollary}
\newtheorem{rem}[prop]{Remark}
\title{A criterion for the properness of the $K$-energy in a general K\"ahler class}
\author{Haozhao Li$^1$  }
\address{ Department of Mathematics, University of Science and Technology
of China, Hefei, 230026, Anhui province, China and Wu Wen-Tsun Key
Laboratory of Mathematics, USTC, Chinese Academy of Sciences, Hefei
230026, Anhui,  China} \email{hzli@ustc.edu.cn}
\author{Yalong Shi$^2$}
\address{
Department of Mathematics and Institute of Mathematical Science, Nanjing University, Nanjing, 210093,
Jiangsu province, China}
\email{shiyl@nju.edu.cn}
\author{Yi Yao}
\address{
Department of Mathematics and Institute of Mathematical Science, Nanjing University, Nanjing, 210093,
Jiangsu province, China}
\email{yeeyoe@163.com}
\thanks{$^1$Research
partially supported by NSFC grant No. 11001080 and No. 11131007.}
\thanks{$^2$Research partially supported by NSFC grants No. 11101206, No. 11171143 and by a Project Funded by the Priority Academic Program Development of Jiangsu Higher Education Institutions.}
\begin{document}

\bibliographystyle{plain}

\date{}

\maketitle

\begin{abstract} In this paper, we give a criterion for the properness
 of the $K$-energy in a general K\"ahler class of a compact K\"ahler
 manifold by using Song-Weinkove's result in \cite{[SW]}.  As applications, we give some K\"ahler classes on
$\mathbb{C}\mathbb{P}^2\#3\overline {\mathbb{C}\mathbb{P}^2}$
and $\mathbb{C}\mathbb{P}^2\#8\overline {\mathbb{C}\mathbb{P}^2}$ in which the $K$-energy is proper.
Finally, we prove Song-Weinkove's result on the existence of critical points of $\hat J$ functional by the
continuity method.

\end{abstract}

\tableofcontents
\section{Introduction}

 The
behavior of the $K$-energy plays an important role in
K\"ahler geometry. It is conjectured by Tian \cite{[Tian]} that there
exists a constant scalar curvature K\"ahler (cscK) metric  in a K\"ahler class
$\Om$ if and only if the $K$-energy is proper on $\Om$. For the
K\"ahler-Einstein case, this was proved by Tian when $M$ has no
nontrivial holomorphic vector fields. For the general case, Chen-Tian
\cite{[CT]} showed that the $K$-energy is bounded from below if $M$
has a cscK metric. On toric manifolds, using Donaldson's idea in
\cite{[Don2]} Zhou-Zhu \cite{[ZhZh]} gave a sufficient condition on
the properness of the (modified) $K$-energy on the space of invariant potentials. In a series of papers
\cite{[Paul1]}\cite{[Paul2]}\cite{[Paul3]}, S. Paul gave a sufficient and necessary condition on
the  lower boundedness and properness of
the $K$-energy on the finite dimensional  spaces of Bergman metrics. However,  it is still difficult to
analyze the behavior of
the $K$-energy on  general K\"ahler
manifolds. In this paper, we give a sufficient condition for the properness
 of the $K$-energy in a general K\"ahler class on a compact K\"ahler
 manifold by using the $J$-flow. \\

The $J$-flow was introduced by Donaldson \cite{[Don1]} and Chen
\cite{[Chen1]} independently, and it was used to obtain the properness or the
lower bound of the $K$-energy on a compact K\"ahler manifold with
negative first Chern class.
As pointed out by Chen \cite{[Chen1]}, the $J$-flow is a gradient
flow of the functional $\hat J_{\oo,
\chi_0}$, which  is strictly convex  along any $C^{1, 1}$ geodesics. Thus,
if there is a critical point of $\hat J_{\oo,
\chi_0}$ in a K\"ahler class, then $\hat J_{\oo,
\chi_0}$ is bounded from below and   the $K$-energy is proper when the first Chern class is negative by the formula of $K$-energy relating $\hat J$.
Therefore, to obtain the properness of the $K$-energy it suffices to
study the existence of the critical point of $\hat J_{\oo,
\chi_0}$. In \cite{[SW]} Song-Weinkove gave a sufficient and necessary condition
to this problem, and their result directly implies that the
$K$-energy is proper on a K\"ahler class $[\chi_0]$ on a
$n$-dimensional K\"ahler manifold $X$ of $c_1(X)<0$ with the
property that there is a K\"ahler metric $\chi'\in [\chi_0]$ such
that
\beq
\Big(-n\frac { c_1(X)\cdot
[\chi_0]^{n-1}}{[\chi_0]^n}\chi'+(n-1)c_1(X)\Big) \wedge \chi'^{n-2}>0.
\label{eq:109}
\eeq
Moreover,
Song-Weinkove asked whether the conclusion still holds if the
inequality (\ref{eq:109}) is not strict. In  \cite{[FLSW]}
 Fang-Lai-Song-Weinkove  studied the $J$-flow on the boundary of the K\"ahler cone and
 gave an affirmative answer in complex dimension 2. Later,
 Song-Weinkove \cite{[SW2]} gave a result on the properness of the
 $K$-energy on a minimal surface with general type. Besides,
 Lejmi-Sz\'ekelyhidi \cite{[LS]} studied the relation of the
 convergence of $J$-flow to a notion of stability.\\

To state our main results, we recall Tian's $\alpha$-invariant for a K\"ahler class
$[\chi_0]$:
$$\al_X([\chi_0])=\sup\Big\{\al>0\;\Big|\;\exists \,C>0,\;
 \int_X\; e^{-\al(\varphi-\sup\varphi)}\chi_0^n\leq C,
 \quad \forall\;\varphi\in \cH(X,  \chi_0 )\Big\},$$
 where $\cH(X,  \chi_0 )$ denotes the space of K\"ahler potentials
 with respect to the metric $\chi_0$.
For any compact subgroup $G$ of $\Aut(X)$, and a $G$-invariant K\"ahler class $[\chi_0]$, we can similarly define the $\alpha_{X,G}$ invariant by using $G$-invariant potentials in the definition.

\begin{theo}\label{theo:main1}Let $X$ be a $n$-dimensional compact K\"ahler manifold.
If the K\"ahler class $[\chi_0]$ satisfies the following conditions
for some constant $\ee:$
\begin{enumerate}
  \item[(1)] $0\leq \ee<\frac {n+1}{n}\al_X([\chi_0]), $
  \item[(2)] $\pi c_1(X)<\ee [\chi_0],$
  \item[(3)]   \beq
 \Big(-n\frac {\pi c_1(X)\cdot
[\chi_0]^{n-1}}{[\chi_0]^n}+\ee\Big)[\chi_0]+(n-1)\pi c_1(X)  >0,
\nonumber
\eeq
\end{enumerate}
then the $K$-energy is proper on $\cH(X, \chi_0).$ If instead of (1), we assume  $[\chi_0]$ is $G$-invariant for a compact subgroup $G$ of $Aut(X)$ ,  and $0\leq \ee<\frac {n+1}{n}\al_{X,G}([\chi_0])$, then  the $K$-energy is proper on the space of $G$-invariant potentials.
\end{theo}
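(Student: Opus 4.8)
The plan is to reduce everything to Song--Weinkove's existence result \cite{[SW]} together with a sharp entropy estimate governed by the $\al$-invariant. Fix a reference metric $\chi_0$ in the class and write $\rho=\mathrm{Ric}(\chi_0)$. For a closed real $(1,1)$-form $\chi$ let $\hat J_{\chi_0,\chi}$ be Chen's functional on $\cH(X,\chi_0)$ whose variation is $\frac1V\int_X\dot\varphi\,(\La_{\chi_\varphi}\chi-\underline c_\chi)\,\chi_\varphi^n$, with $\underline c_\chi=n[\chi]\cdot[\chi_0]^{n-1}/[\chi_0]^n$; this functional is linear in $\chi$, and for an exact form $\chi=\pbps f$ one computes $\hat J_{\chi_0,\pbps f}(\varphi)=\frac1V\int_X f(\chi_\varphi^n-\chi_0^n)$, which is bounded by $2\sup|f|$. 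Computing variations (integration by parts, using $\La_{\chi_\varphi}\mathrm{Ric}(\chi_\varphi)=R(\chi_\varphi)$) yields the Chen--Tian type identity $\nu(\varphi)=\mathrm{Ent}(\varphi)-\hat J_{\chi_0,\rho}(\varphi)$ up to a constant, where $\mathrm{Ent}(\varphi)=\frac1V\int_X\log(\chi_\varphi^n/\chi_0^n)\,\chi_\varphi^n\ge0$.

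The first move is to introduce the twist. Writing $\rho=\ee\chi_0-\theta$ with $\theta:=\ee\chi_0-\rho$, linearity gives
\[
\nu(\varphi)=\mathrm{Ent}(\varphi)-\ee\,\hat J_{\chi_0,\chi_0}(\varphi)+\hat J_{\chi_0,\theta}(\varphi).
\]
By (2) the class $[\theta]=\ee[\chi_0]-\pi c_1(X)$ is Kähler, so I may replace $\theta$ by a Kähler representative $\theta'$ at the cost of the bounded term above; it then remains to prove that $\hat J_{\chi_0,\theta'}$ is proper while $\mathrm{Ent}-\ee\hat J_{\chi_0,\chi_0}$ is bounded below.

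For the first term I apply \cite{[SW]} to $\hat J_{\chi_0,\theta'}$, whose constant is $\underline c_{\theta'}=n\ee+a$ with $a:=-n\pi c_1(X)\cdot[\chi_0]^{n-1}/[\chi_0]^n$. Their criterion requires a Kähler $\chi'\in[\chi_0]$ with $\big((n\ee+a)\chi'-(n-1)\theta'\big)\wedge\chi'^{n-2}>0$. The crucial observation is that the class of $(n\ee+a)\chi'-(n-1)\theta'$ is exactly $(\ee+a)[\chi_0]+(n-1)\pi c_1(X)$, which is Kähler by (3); moreover for any $\eta$ in that class one has $(n\ee+a)[\chi_0]-[\eta]=(n-1)\big(\ee[\chi_0]-\pi c_1(X)\big)$, again Kähler by (2). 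Hence I choose Kähler forms $\eta$ and $\psi$ in the classes of (3) and of $(n-1)(\ee[\chi_0]-\pi c_1(X))$, and set $\chi':=(\eta+\psi)/(n\ee+a)\in[\chi_0]$, $\theta':=\psi/(n-1)$; then $(n\ee+a)\chi'-(n-1)\theta'=\eta>0$, so the wedge with $\chi'^{n-2}$ is positive and the criterion holds. Song--Weinkove's theorem then produces a critical metric of $\hat J_{\chi_0,\theta'}$, and strictness of the inequality upgrades this to properness, $\hat J_{\chi_0,\theta'}(\varphi)\ge\delta J(\varphi)-C$ for some $\delta>0$.

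For the remaining term the $\al$-invariant enters. Normalizing $\sup\varphi=0$, I identify $\hat J_{\chi_0,\chi_0}=I-J=E+A$, where $E$ is the Aubin--Mabuchi energy and $A:=-\frac1V\int_X\varphi\,\chi_\varphi^n\ge0$, so that $-E\ge0$ and $A\le(n+1)(-E)$. The Gibbs variational principle with $g=-\al\varphi$ gives, for any $\al<\al_X([\chi_0])$, the bound $\mathrm{Ent}(\varphi)\ge\al A-\log\frac1V\int_X e^{-\al\varphi}\chi_0^n\ge\al A-C$. Combining these,
\[
\mathrm{Ent}(\varphi)-\ee\hat J_{\chi_0,\chi_0}(\varphi)\ge(\al-\ee)A+\ee(-E)-C\ge\big((n+1)\al-n\ee\big)(-E)-C,
\]
which is $\ge-C$ as soon as $\ee<\tfrac{n+1}{n}\al$; condition (1) provides such an $\al$. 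Together with the previous step this yields $\nu(\varphi)\ge\delta J(\varphi)-C'$, i.e. properness. The $G$-invariant statement follows verbatim: the decomposition, the Song--Weinkove result, and the $J$-flow all respect $G$-invariance, while the entropy estimate uses $\al_{X,G}$ in place of $\al_X$. I expect the main obstacle to be the simultaneous choice of Kähler representatives that turns Song--Weinkove's cohomological cone condition into an honest pointwise positivity, together with verifying that the threshold in the entropy estimate is exactly $\tfrac{n+1}{n}\al$, matching hypothesis (1).
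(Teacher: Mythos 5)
Your proposal follows the same overall strategy as the paper: decompose the $K$-energy into entropy plus an $\hat J$-term, twist by $\ee\chi_0$ so that the twisting class $\ee[\chi_0]-\pi c_1(X)$ becomes K\"ahler (condition (2)), verify the Song--Weinkove criterion for the twisted functional via condition (3), control the change of representative by the bounded term $\hat J_{\chi_0,\pbps f}$ (this is exactly the paper's estimate (\ref{eq:107})), and feed the $\al$-invariant into the entropy (condition (1)). Your choice of representatives is in fact cleaner than the paper's: taking K\"ahler forms $\eta$ in the class of condition (3) and $\psi\in(n-1)\big(\ee[\chi_0]-\pi c_1(X)\big)$ and setting $\chi'=(\eta+\psi)/(n\ee+a)$, $\theta'=\psi/(n-1)$ gives the pointwise positivity $(n\ee+a)\chi'-(n-1)\theta'=\eta>0$ directly, whereas the paper achieves the same positivity (\ref{eq:108}) by manipulating Ricci forms of auxiliary metrics. (You should record that $n\ee+a>0$, which follows from (2) by intersecting with $[\chi_0]^{n-1}$; this is needed for $\chi'$ to be K\"ahler.)

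There is, however, one genuine gap: the assertion that ``strictness of the inequality upgrades this to properness, $\hat J_{\chi_0,\theta'}(\varphi)\geq\delta J(\varphi)-C$.'' Neither \cite{[SW]} nor Chen's convexity result gives this: existence of a critical point plus convexity along $C^{1,1}$ geodesics yields only a \emph{lower bound} for $\hat J_{\chi_0,\theta'}$, and coercivity of $\hat J$ is a strictly stronger statement requiring its own argument. Since your entropy step only establishes that $\mathrm{Ent}-\ee(I-J)$ is bounded below, your final inequality $\nu\geq\delta J-C'$ has no source of properness left without that claim. (Note also that your displayed chain $(\al-\ee)A+\ee(-E)\geq\big((n+1)\al-n\ee\big)(-E)$ is backwards when $\al>\ee$, since $A\leq(n+1)(-E)$; it is harmless there because both terms are then nonnegative, but it matters for the repair below.) Two repairs work. (a) Prove the coercivity you claimed by perturbation: since $\theta'>0$ and $\eta\wedge\chi'^{n-2}>0$ strictly on a compact manifold, for small $\epsilon_0>0$ the form $\theta'-\epsilon_0\chi_0$ is still K\"ahler and, with constant $\underline c_{\theta'}-n\epsilon_0$, still satisfies (\ref{eq:SW}); hence $\hat J_{\chi_0,\theta'}=\epsilon_0(I-J)+\hat J_{\chi_0,\theta'-\epsilon_0\chi_0}\geq\epsilon_0(I-J)-C$. (b) Do what the paper does: use only lower-boundedness of $\hat J_{\chi_0,\theta'}$ and extract properness from the entropy term; your own estimate suffices, since for $\al\geq\ee$ one has $(\al-\ee)A+\ee(-E)\geq\big(\al-\tfrac{n-1}{n}\ee\big)(I-J)$ using $A\geq I-J$ and $-E\geq\tfrac1n(I-J)$, while for $\al<\ee$ one has $(\al-\ee)A+\ee(-E)\geq\big((n+1)\al-n\ee\big)(-E)\geq\tfrac1n\big((n+1)\al-n\ee\big)(I-J)$; in both cases the coefficient is positive precisely when $\ee<\tfrac{n+1}{n}\al$, giving $\nu\geq\delta(I-J)-C$. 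With either repair the argument is complete, and the $G$-invariant case goes through as you indicate.
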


In the polarized algebraic case,  our theorem translates into the following
\begin{cor}\label{cor:main1} Let $X$ be a $n$-dimensional compact K\"ahler manifold
and $L$ an ample holomorphic line bundle on $X$. If there is a positive number $\ee>0$ such that the following
conditions hold:
\begin{enumerate}
  \item $\al_X(\ee\pi c_1(L))>\frac{n}{n+1},$ (or equivalently, $\al_X(\pi c_1(L))>\frac{n\ee}{n+1},$)
  \item $ K_X+\ee L>0,$
  \item  \beq
\frac {nK_X\cdot
L^{n-1}+\ee L^n}{L^n}L-(n-1)K_X>0,\nonumber
\eeq
\end{enumerate}
then the $K$-energy is proper on the K\"ahler class $\pi c_1(L)$.
\end{cor}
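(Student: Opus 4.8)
The plan is to derive Corollary~\ref{cor:main1} as a direct specialization of Theorem~\ref{theo:main1} to the class $[\chi_0]=\pi c_1(L)$, translating each hypothesis through the identity $c_1(X)=-c_1(K_X)$. Since $L$ is ample, $\pi c_1(L)$ is a genuine K\"ahler class, so $\cH(X,\chi_0)$ is well defined for any K\"ahler form $\chi_0\in\pi c_1(L)$, and ``properness on $\cH(X,\chi_0)$'' in the theorem is exactly ``properness on the class $\pi c_1(L)$'' asserted in the corollary.

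First I would record the basic intersection-number identities. Taking $[\chi_0]=\pi c_1(L)$ gives $[\chi_0]^n=\pi^n L^n$, and using $\pi c_1(X)=-\pi c_1(K_X)$ one finds
\[
\frac{\pi c_1(X)\cdot[\chi_0]^{n-1}}{[\chi_0]^n}=-\frac{K_X\cdot L^{n-1}}{L^n}.
\]
Substituting this into hypothesis (3) of the theorem and extracting the common factor $\pi$ converts it into $\frac{nK_X\cdot L^{n-1}+\ee L^n}{L^n}L-(n-1)K_X>0$, which is hypothesis (3) of the corollary. In the same way, hypothesis (2) of the theorem, $\pi c_1(X)<\ee[\chi_0]$, is equivalent to $\pi c_1(K_X+\ee L)>0$, i.e. to $K_X+\ee L>0$, matching the corollary's (2).

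It remains to match hypothesis (1). Here I would use the elementary scaling law for Tian's $\al$-invariant, $\al_X(\la[\chi_0])=\la^{-1}\al_X([\chi_0])$ for $\la>0$, which holds because a K\"ahler potential for the rescaled class $\la[\chi_0]$ is $\la$ times a potential for $[\chi_0]$, so the exponent in the integrability condition rescales by $\la$. Hence $\al_X(\ee\pi c_1(L))=\ee^{-1}\al_X(\pi c_1(L))$, and the three statements $\al_X(\ee\pi c_1(L))>\tfrac{n}{n+1}$, $\al_X(\pi c_1(L))>\tfrac{n\ee}{n+1}$, and $0\le\ee<\tfrac{n+1}{n}\al_X([\chi_0])$ are all equivalent; this verifies the corollary's (1) and simultaneously explains the parenthetical ``equivalently'' in its statement. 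With all three hypotheses matched, Theorem~\ref{theo:main1} applies and yields the properness of the $K$-energy on $\pi c_1(L)$.

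Being essentially a dictionary translation, the argument has no analytic obstacle; the only points requiring care are the bookkeeping ones---the sign convention $c_1(X)=-c_1(K_X)$, the consistent identification of $\pi c_1(L)$ with its intersection numbers $L^k$, and the scaling identity for $\al_X$---which together guarantee that the two systems of inequalities coincide term by term.
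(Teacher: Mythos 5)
Your proof is correct and is exactly the argument the paper intends: the paper gives no separate proof of this corollary, presenting it as the direct translation of Theorem~\ref{theo:main1} to $[\chi_0]=\pi c_1(L)$ via $c_1(X)=-c_1(K_X)$, which is precisely your dictionary. Your verification of the scaling law $\al_X(\la\Om)=\la^{-1}\al_X(\Om)$, which the paper uses implicitly in the parenthetical ``equivalently,'' is a worthwhile addition.
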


A direct corollary of Theorem \ref{theo:main1} is the following
result, which gives a partial answer to the question of Song-Weinkove in \cite{[SW]}
and generalize a result of Fang-Lai-Song-Weinkove \cite{[FLSW]} to
higher dimensions.

\begin{cor}\label{cor:main2}Let $X$ be a compact K\"ahler manifold with
$c_1(X)<0$.
If the K\"ahler class $[\chi_0]$ satisfies
\beq
 -n\frac { c_1(X)\cdot
[\chi_0]^{n-1}}{[\chi_0]^n}[\chi_0]+(n-1)c_1(X) \geq
0 \label{eq:100}
\eeq
then the $K$-energy is proper on $\cH(X, \chi_0).$
\end{cor}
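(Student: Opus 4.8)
The plan is to deduce Corollary \ref{cor:main2} directly from Theorem \ref{theo:main1} by choosing a suitable small parameter $\ee>0$ and checking the three hypotheses of the theorem. The essential observation is that on a manifold with $c_1(X)<0$ the non-strict inequality (\ref{eq:100}) can be upgraded to the strict positivity required in condition (3) by an arbitrarily small perturbation in the direction of the K\"ahler class $[\chi_0]$, while $\ee$ is still kept below the threshold coming from the $\al$-invariant.

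Concretely, I would first set
$$\mu := -n\,\frac{\pi c_1(X)\cdot[\chi_0]^{n-1}}{[\chi_0]^n},$$
and note that $\mu>0$: since $c_1(X)<0$ and $[\chi_0]$ is K\"ahler, the intersection numbers satisfy $c_1(X)\cdot[\chi_0]^{n-1}<0$ and $[\chi_0]^n>0$. Multiplying (\ref{eq:100}) by the positive constant $\pi$, the hypothesis is equivalent to the statement that the $(1,1)$-class $\mu[\chi_0]+(n-1)\pi c_1(X)$ is semi-positive.

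Next I would choose $\ee$ with $0<\ee<\frac{n+1}{n}\al_X([\chi_0])$. Such a choice is possible precisely because Tian's $\al$-invariant of any K\"ahler class on a compact K\"ahler manifold is strictly positive; this gives condition (1), and it is the one place where some room is genuinely required. With this $\ee$ fixed, condition (2), namely $\pi c_1(X)<\ee[\chi_0]$, holds because $\ee[\chi_0]-\pi c_1(X)$ is a sum of the K\"ahler class $\ee[\chi_0]$ and the K\"ahler class $-\pi c_1(X)$ (recall $-\pi c_1(X)>0$ as $c_1(X)<0$), hence K\"ahler. Finally, condition (3) reads $(\mu+\ee)[\chi_0]+(n-1)\pi c_1(X)>0$, and I would write
$$(\mu+\ee)[\chi_0]+(n-1)\pi c_1(X)=\Big(\mu[\chi_0]+(n-1)\pi c_1(X)\Big)+\ee[\chi_0].$$
The first bracketed term is semi-positive by the reformulated hypothesis, and $\ee[\chi_0]$ is K\"ahler, so the sum is a K\"ahler class and the strict inequality holds. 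All three conditions of Theorem \ref{theo:main1} being met, properness of the $K$-energy on $\cH(X,\chi_0)$ follows.

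The step I expect to carry the entire content is the passage from the borderline inequality $\geq 0$ to a strict one: the corollary says nothing beyond the fact that one can always absorb the equality case by an arbitrarily small positive perturbation $\ee[\chi_0]$ without violating the $\al$-invariant bound. I do not anticipate any analytic estimates here beyond those already packaged inside Theorem \ref{theo:main1}; the only ingredients are the positivity of $\mu$, the positivity of $\al_X([\chi_0])$, and the elementary fact that a semi-positive class plus a K\"ahler class is K\"ahler.
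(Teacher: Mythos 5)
Your proposal is correct and follows essentially the same route as the paper: both deduce the corollary from Theorem \ref{theo:main1} by picking $\ee\in(0,\tfrac{n+1}{n}\al_X([\chi_0]))$, observing that condition (2) is automatic from $c_1(X)<0$, and upgrading the non-strict hypothesis (\ref{eq:100}) to the strict condition (3) by adding the K\"ahler class $\ee[\chi_0]$. Your write-up merely makes explicit the small facts (positivity of $\al_X([\chi_0])$, nef plus K\"ahler is K\"ahler) that the paper's terse proof leaves implicit.
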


Note that when the strict inequality holds, our condition (\ref{eq:100}) is stronger than
(\ref{eq:109}). For some technical
reasons, we cannot weaken (\ref{eq:100}) to Song-Weinkove's original
condition. Moreover, in Theorem \ref{theo:main1} we don't require the condition
that $c(X)<0$. In the case of $c_1(X)>0$, we have the following
result. Although  this result is very simple and its original proof
is very direct (see, for example,  Tian's book \cite{[Tian]}, page 95) , we feel it is still interesting to write it as a
corollary of Theorem \ref{theo:main1}.

\begin{cor}\label{cor:main3}Let $X$ be a compact K\"ahler manifold with $c_1(X)>0$.
If the $\alpha$-invariant $\al_X(\pi c_1(X))>\frac n{n+1}$, then
then the $K$-energy is proper on the K\"ahler class $\pi c_1(X).$
\end{cor}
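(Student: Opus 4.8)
The plan is to deduce this directly from Theorem \ref{theo:main1} by taking $[\chi_0]=\pi c_1(X)$ and choosing a suitable value of the parameter $\epsilon$. The key observation is that with this choice the intersection numbers in the hypotheses collapse: since $[\chi_0]=\pi c_1(X)$ we have $\pi c_1(X)\cdot[\chi_0]^{n-1}=[\chi_0]^n$, so the ratio $\frac{\pi c_1(X)\cdot[\chi_0]^{n-1}}{[\chi_0]^n}$ equals $1$. Substituting into condition (3) of Theorem \ref{theo:main1} reduces it to $(\epsilon-1)[\chi_0]>0$, which holds precisely when $\epsilon>1$, because $[\chi_0]$ is a K\"ahler class. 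The same substitution turns condition (2), namely $\pi c_1(X)<\epsilon[\chi_0]=\epsilon\,\pi c_1(X)$, into $(\epsilon-1)\pi c_1(X)>0$, again valid exactly for $\epsilon>1$ since $c_1(X)>0$ makes $\pi c_1(X)$ positive.

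Next I would check that condition (1) leaves room for such an $\epsilon$. By hypothesis $\alpha_X(\pi c_1(X))>\frac{n}{n+1}$, so $\frac{n+1}{n}\alpha_X([\chi_0])>1$; hence the interval $\bigl(1,\tfrac{n+1}{n}\alpha_X([\chi_0])\bigr)$ is nonempty. Any $\epsilon$ chosen in this interval simultaneously satisfies $0\le\epsilon<\frac{n+1}{n}\alpha_X([\chi_0])$ (condition (1)) together with the strict inequalities in (2) and (3) established above. Fixing such an $\epsilon$ and invoking Theorem \ref{theo:main1} then gives that the $K$-energy is proper on $\cH(X,\chi_0)=\cH(X,\pi c_1(X))$, which is the assertion.

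Since every step is an elementary substitution, there is no genuine analytic obstacle; the only point needing care is the bookkeeping that shows all three conditions can be met at once, and this reduces entirely to the nonemptiness of the interval above. This also explains the role of the threshold $\frac{n}{n+1}$: it is exactly the bound that makes the constraint $\epsilon>1$, which is forced by (2) and (3) in the case $[\chi_0]=\pi c_1(X)$, compatible with the $\alpha$-invariant requirement (1).
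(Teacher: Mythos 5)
Your proposal is correct and follows exactly the paper's own argument: set $[\chi_0]=\pi c_1(X)$, observe that conditions (2) and (3) of Theorem \ref{theo:main1} reduce to $\ee>1$, and use the hypothesis $\al_X(\pi c_1(X))>\frac{n}{n+1}$ to pick $\ee$ in the nonempty interval $\bigl(1,\tfrac{n+1}{n}\al_X([\chi_0])\bigr)$. The paper states this more tersely, but the substitutions you spell out are precisely what it leaves implicit.
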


An application of Corollary \ref{cor:main2} is the following result,
which says that the properness of the $K$-energy is not a sufficient
condition of the smooth convergence of the $J$-flow. This answers a
question of J. Ross \cite{[Ross]}. This result is also implied by
Corollary 1.2 of \cite{[FLSW]}.

\begin{cor}\label{cor:main4} There exists a compact K\"ahler surface
with a K\"ahler class $\Om$ such that the $K$-energy is proper on
$\Om$ but the $J$-flow doesn't converge smoothly.\\
\end{cor}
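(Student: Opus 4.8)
The plan is to reduce both phenomena to the positivity type of a single cohomology class and then realize the boundary case geometrically. Work on a compact projective K\"ahler surface ($n=2$) and set
\[
\alpha \;:=\; -2\,\frac{c_1(X)\cdot[\chi_0]}{[\chi_0]^2}\,[\chi_0]+c_1(X),
\]
which is the left-hand side of \eqref{eq:100} for $n=2$ and, as a form, the expression in Song-Weinkove's condition \eqref{eq:109} (for $n=2$ the factor $\chi'^{\,n-2}$ disappears). On the one hand, Corollary \ref{cor:main2} requires only that $\alpha\geq 0$, i.e. that $\alpha$ be \emph{nef}, to conclude that the $K$-energy is proper on $\cH(X,\chi_0)$. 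On the other hand, \eqref{eq:109} asks for $\chi'\in[\chi_0]$ with $-2\frac{c_1(X)\cdot[\chi_0]}{[\chi_0]^2}\chi'+c_1(X)>0$ as a $(1,1)$-form; writing $K_X:=-c_1(X)$, which is a K\"ahler class since $c_1(X)<0$, one checks that this is solvable precisely when $\alpha$ is itself a \emph{K\"ahler} class. Since \eqref{eq:109} is necessary for the smooth convergence of the $J$-flow, it suffices to exhibit a surface with $c_1(X)<0$ and a K\"ahler class $[\chi_0]$ for which $\alpha$ is nef but not K\"ahler, i.e. $\alpha$ lies on the boundary of the K\"ahler cone.

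The next step is to see what such a class must look like. Using $\lambda[\chi_0]^2=2(K_X\cdot[\chi_0])$ for $\lambda:=2\frac{K_X\cdot[\chi_0]}{[\chi_0]^2}$, a direct computation gives $\alpha^2=K_X^2>0$, since $K_X$ is ample. Hence $\alpha$ never degenerates to a nef class of zero self-intersection, and by the Hodge index theorem the only way a nef class with positive self-intersection can fail to be K\"ahler is to satisfy $\alpha\cdot C_0=0$ for some irreducible curve $C_0$ with $C_0^2<0$. Because $c_1(X)<0$ forces $K_X$ ample, $X$ is a minimal surface of general type with no $(-1)$- or $(-2)$-curves, so $C_0$ is a negative curve of arithmetic genus $\geq 1$ (or a rational curve with $C_0^2\leq-3$). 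I would therefore take $X$ to be a minimal surface of general type with ample canonical bundle, Picard number two, containing such an irreducible negative curve $C_0$; for Picard number two the nef cone is bounded by exactly two extremal rays, which makes the final positivity check a finite computation in the N\'eron-Severi lattice.

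With $X$ and $C_0$ fixed, I would tune $[\chi_0]$ inside the (two-dimensional) K\"ahler cone so that $\alpha$ lands on the ray orthogonal to $C_0$, that is, solve
\[
2\,\frac{K_X\cdot[\chi_0]}{[\chi_0]^2}\,\big([\chi_0]\cdot C_0\big)\;=\;K_X\cdot C_0 .
\]
This is one scaling-invariant equation on the projectivized K\"ahler cone, which is an interval, and it admits an interior solution by an intermediate-value argument for suitably chosen $X$. Since $\alpha^2=K_X^2>0$ and $\alpha\cdot C_0=0$, the class $\alpha$ spans, up to sign, the line $C_0^{\perp}$ in the N\'eron-Severi group, which for Picard number two is precisely the boundary ray of the nef cone orthogonal to the negative curve $C_0$; choosing parameters so that $\alpha$ is the nef (rather than anti-nef) generator makes $\alpha$ nef, while $\alpha\cdot C_0=0$ shows it is not K\"ahler. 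Corollary \ref{cor:main2} then gives that the $K$-energy is proper on $\cH(X,\chi_0)$, whereas the failure of \eqref{eq:109} shows the $J$-flow cannot converge smoothly, which is Corollary \ref{cor:main4}.

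The main obstacle is the algebro-geometric input rather than the analysis: one must produce an honest minimal surface of general type whose canonical class is ample, hence without $(-1)$- and $(-2)$-curves, that nonetheless carries an irreducible curve of negative self-intersection and has Picard number two, and then verify that the tuned class $\alpha$ is nef against the \emph{entire} curve cone, not merely against $C_0$. The Picard-number-two hypothesis is what keeps this last step elementary, since the nef cone is then cut out by only two extremal classes; producing the surface itself, for instance as a fibred surface of general type carrying a section of negative self-intersection, or by citing a known family, is the essential ingredient.
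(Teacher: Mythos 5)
Your core reduction is the same as the paper's: in complex dimension two, Song--Weinkove's condition \eqref{eq:109} is necessary for smooth convergence of the $J$-flow (Theorem \ref{theo:SW}), and it amounts to the class $\alpha$ being K\"ahler, while Corollary \ref{cor:main2} gives properness of the $K$-energy as soon as $\alpha$ is nef; so one wants $\alpha$ nef but not K\"ahler. Where you differ is in how the borderline class is produced. The paper argues softly: inside the K\"ahler cone of a fixed $X$ with $c_1(X)<0$, the set of classes $[\chi_0]$ for which \eqref{eq:101} is solvable is an open subcone containing $K_X$ (where $\alpha=K_X$), and at any point of its relative boundary $\alpha$ is automatically nef, being a limit of K\"ahler classes, but not K\"ahler; this needs no Picard-rank hypothesis and no Hodge-index argument. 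Your route is more constructive: the computation $\alpha^2=K_X^2>0$ (correct, and a nice observation absent from the paper), the Nakai--Moishezon/Hodge-index analysis showing that degeneration can only occur against an irreducible curve $C_0$ with $C_0^2<0$, and the intermediate-value tuning of $[\chi_0]$ so that $\alpha\cdot C_0=0$. Two small points there: $\alpha\cdot[\chi_0]=K_X\cdot[\chi_0]>0$ follows automatically from $\lambda[\chi_0]^2=2K_X\cdot[\chi_0]$, so no ``choice of parameters'' is needed to land on the nef rather than anti-nef generator of $C_0^{\perp}$; and the endpoint of your intermediate-value argument uses that the nef boundary ray $N_0\in C_0^{\perp}$ satisfies $N_0^2>0$, which is forced by nondegeneracy of the intersection form on a rank-two N\'eron--Severi lattice, so the hedge ``for suitably chosen $X$'' is unnecessary --- the argument works for every such $X$.

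The one genuine gap --- which you flag yourself --- is that no surface is actually exhibited: the corollary is an existence statement, so the proof is incomplete until one produces a minimal surface of general type with $K_X$ ample, Picard number two, and an irreducible curve of negative self-intersection. (To be fair, the paper's own three-line proof is tacit on the same point: it assumes without comment that the cone defined by \eqref{eq:101} has nonempty relative boundary inside the K\"ahler cone, which fails, e.g., for Picard number one; the paper instead points to Corollary 1.2 of \cite{[FLSW]} for an independent confirmation.) The gap is real but standard to fill: take a smooth quintic surface $X\subset\CC\PP^3$ containing a line $L$; then $K_X$ is the hyperplane class, hence ample, $L^2=2-5=-3$ by adjunction, and by Noether--Lefschetz theory a very general quintic containing a line has N\'eron--Severi group of rank two, generated by the hyperplane class and $L$. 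Inserting this (or any comparable example, or simply replacing your rank-two nefness check by the paper's continuity argument, which removes the Picard-rank restriction altogether) makes your proof complete.
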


One might ask whether the conditions of Theorem \ref{theo:main1} are
optimal. We calculate two concrete examples here and apply Theorem
\ref{theo:main1} to determine
the K\"ahler classes on which the $K$ energy is proper.
 Let
$X$ be the blowup of $\CC \PP^2$ at three general points and  $E_1,\dots,E_3$
the exceptional divisors of the blowing up map. Denote by $F_i,\ i=1,2,3$ the strict transforms of lines through two of the three blowing up centers. Consider the class $L_{ \la}=(E_1+E_2+E_3)+\la\,
(F_1+F_2+F_3)$ for a positive rational number $\la$. Then applying
Theorem \ref{theo:main1}, if $\la$ satisfies
 \beq \frac 56<\la<\frac 65, \label{eq:102}\eeq
  then the $K$-energy  is proper on $\pi c_1(L_{ \la}).$ The details are contained in section \ref{sec:toric}.
This example was also studied by Zhou-Zhu in \cite{[ZhZh]}. They
analyzed the expression of the $K$-energy carefully on toric manifolds
 and showed that the $K$-energy is is proper on $G$-invariant
 metrics if
 \beq 0.61\approx {1\over 1+\frac{\sqrt 10}{5}}<\la < 1+\frac{\sqrt 10}{5}\approx 1.63. \label{eq:014}\eeq
We  see that the conditions (\ref{eq:014}) is less restrictive
than (\ref{eq:102}). Therefore, the conditions in Theorem
\ref{theo:main1} is not sharp.\\

Theorem \ref{theo:main1} shows that the $K$-energy and $\alpha$-invariants are closely related. In \cite{[Der]} Dervan proved that the $\al$ invariant also closely relates
the $K$-stability. In \cite{[Der]}, Dervan studied the $\al$-invariant and
$K$-stability for general polarizations on Fano manifolds. An example of
\cite{[Der]} is the the blowup of $\mathbb C \PP^2$
 at eight points in general positions. Let $E_i$ be the exceptional divisors,
 and $L_\lambda=3H-\sum_{i=1}^7E_i-\lambda E_8$,  where  $\lambda$
 is a  positive rational number. Dervan proved that when
\beq 0.76\approx\frac{1}{9}(10-\sqrt{10})<\lambda<\sqrt{10}-2\approx1.16,\label{eq:016}\eeq $(X,L_\lambda)$ is K-stable.
  Applying
Theorem \ref{theo:main1} to this example, we know that the $K$-energy is proper on $\pi c_1(L_{\la})$
if
\beq \frac{4}{5}<\lambda <\frac{10}{9}. \label{eq:015}\eeq
 Note that the interval (\ref{eq:015}) is strictly contained in (\ref{eq:016}).
According to Tian's conjecture and general
Yau-Tian-Donaldson conjecture, this example  hints that the
conditions in Theorem \ref{theo:main1} is not optimal.  There are some overlap regions between our
results and Dervan's for the properness of the $K$-energy and
$K$-stability.  All these results provide some support to general
Yau-Tian-Donaldson conjecture for general cscK metrics.\\

Finally, we reprove the existence of the critical point of $\hat J$
functional under the condition (\ref{eq:SW}) by the continuity
method. In \cite{[FLM]}, Fang-Lai-Ma discussed a class of fully nonlinear
flows in K\"ahler geometry, which includes the $J$-flow as a special
case. In Remark 1.3 of  \cite{[FLM]}, they asked
whether the critical points of those fully nonlinear
flows   can be solved by using the
elliptic method instead of the geometric flow method.  Later, in a series of papers
\cite{[GL1]}\cite{[GL2]}\cite{[GS]} Guan and his collaborators gave
some $C^2$ estimates for these critical points on Hermitian
manifolds. Then Sun proved Song-Weinkove's
result by the elliptic method on general Hermitian manifolds \cite{[S]}. Here we give a different proof only  using the
estimates in \cite{[SW]} \cite{[W1]} and \cite{[W2]}.

\begin{theo}(cf. \cite{[SW]})\label{theo:main4}
If there is a metric $\chi'\in [\chi_0]$ satisfying
  \beq
(nc\chi'-(n-1)\oo)\wedge \chi'^{n-2}>0, \nonumber
  \eeq
then there is a smooth K\"ahler metric $\chi\in [\chi_0]$
  satisfying the equation
  \beq \oo\wedge
\chi^{n-1}=c\chi^n. \nonumber\eeq

\end{theo}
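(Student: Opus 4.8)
The plan is to reach the critical metric by a continuity method in which $\chi'$ is kept as the reference and the form $\oo$ on the left-hand side is deformed. Writing $\chi=\chi'+\pbps\varphi$ with $\chi>0$ and taking cohomology classes in $\oo\wedge\chi^{n-1}=c\chi^n$, one sees that $c$ is forced to equal $c=\frac{[\oo]\cdot[\chi_0]^{n-1}}{[\chi_0]^n}$ and that the equation is equivalent to the scalar equation $\tr_\chi\oo=nc$. For $t\in[0,1]$ I introduce the family
\beq
\oo_t=(1-t)\chi'+t\,\oo,\qquad c_t=\frac{[\oo_t]\cdot[\chi_0]^{n-1}}{[\chi_0]^n}=(1-t)+tc,
\nonumber
\eeq
and consider the equations $\tr_\chi\oo_t=nc_t$ for $\chi=\chi'+\pbps\varphi_t>0$. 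At $t=1$ this is the desired equation, while at $t=0$ one has $\oo_0=\chi'$, $c_0=1$, and $\varphi_0=0$ (so $\chi=\chi'$) is an explicit solution. Note $\oo_t>0$ and $c_t>0$ stay in a fixed compact range.

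The key point that makes this path usable is that the estimates of Weinkove and Song--Weinkove require only the cone condition, and that this condition is preserved along the family. Indeed, a direct computation gives
\beq
(nc_t\chi'-(n-1)\oo_t)\wedge\chi'^{n-2}=(1-t)\,\chi'^{n-1}+t\,(nc\chi'-(n-1)\oo)\wedge\chi'^{n-2}.
\nonumber
\eeq
The first term is a positive $(n-1,n-1)$-form for every $t<1$, and the second is $t\ge 0$ times the form in the hypothesis, which is positive. Hence the right-hand side is positive for all $t\in[0,1]$, and by continuity and compactness it is bounded below by a fixed positive multiple of $\chi'^{n-1}$. Thus the structural hypothesis behind the a priori estimates holds uniformly in $t$.

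Let $T\subseteq[0,1]$ be the set of $t$ for which a smooth solution exists; we have shown $0\in T$. For openness, the linearization of $\varphi\mapsto\tr_\chi\oo_t$ is $L\psi=-\chi^{i\bar k}(\oo_t)_{i\bar j}\chi^{l\bar j}\psi_{k\bar l}$, a second-order elliptic operator with positive-definite leading coefficient and no zeroth-order term; it is therefore Fredholm of index zero on the usual Hölder spaces, with kernel and cokernel equal to the constants. Since $c_t$ is free to vary, the implicit function theorem produces solutions for all $t$ near any $t_0\in T$, so $T$ is open. For closedness I invoke the a priori estimates of \cite{[W1]}, \cite{[W2]}, \cite{[SW]}: the $C^0$ estimate and, crucially, the second-order estimate bounding $\DD\varphi_t$ under the cone condition. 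Because the cone condition holds with a uniform lower bound and $\oo_t,c_t$ are uniformly controlled, these bounds are uniform in $t$. The operator $\varphi\mapsto-\tr_\chi\oo_t=-\sum_i\la_i^{-1}$, where the $\la_i$ are the eigenvalues of $\chi$ relative to $\oo_t$, is concave in the complex Hessian of $\varphi$, so Evans--Krylov upgrades the uniform $C^2$ bound to a uniform $C^{2,\al}$ bound, and Schauder estimates then give uniform $C^{k}$ bounds for every $k$. Arzel\`a--Ascoli shows that along any $t_j\to t^\ast$ the solutions subconverge in $C^\infty$ to a smooth solution, so $t^\ast\in T$ and $T$ is closed.

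Since $T$ is a nonempty, open and closed subset of the connected interval $[0,1]$, we conclude $T=[0,1]$; the solution at $t=1$ is the desired metric $\chi\in[\chi_0]$. The main obstacle is the second-order a priori estimate under the cone condition, which is exactly the analytic content supplied by \cite{[W1]}, \cite{[W2]}, \cite{[SW]}; the work specific to this proof lies in choosing the continuity path so that this estimate is available uniformly, that is, in the verification above that the cone condition propagates along the linear family with a uniform positive lower bound. A secondary technical point is the $C^0$ estimate, which for this $J$-type equation is not completely elementary but is again covered by the cited estimates.
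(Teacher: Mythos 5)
Your proof is correct and globally follows the same strategy as the paper's: the identical continuity path $\oo_t=(1-t)\chi'+t\oo$, $c_t=(1-t)+tc$, the identical key observation that $nc_t\chi'-(n-1)\oo_t=(1-t)\chi'+t\bigl(nc\chi'-(n-1)\oo\bigr)$ so the cone condition propagates along the path, and closedness via the Weinkove/Song--Weinkove estimates followed by Evans--Krylov and Schauder. The genuine divergence is in the openness step. The paper shows that the linearization $DL(f)=-h^{p\bar q}f_{p\bar q}$ is self-adjoint in $L^2(\chi_{t_0}^n)$ --- a computation that uses both that one linearizes at a solution (so $\chi_{t_0}^{i\bar j}g_{t_0,i\bar j}$ is constant) and that $\oo_{t_0}$, $\chi_{t_0}$ are closed --- and thereby identifies the cokernel with the kernel, i.e.\ with the constants, giving invertibility on the quotient spaces $C^{k,\al}/\RR$. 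You avoid self-adjointness via index-zero Fredholm theory with the constant carried as an extra unknown; this works and is in some ways more robust (it never uses the K\"ahler condition), but as written it has a small hole: for a non-self-adjoint operator, ``cokernel equal to the constants'' is exactly the point that requires an argument. The fix is short: the kernel is the constants by the strong maximum principle, index zero then makes $\mathrm{Im}(DL)$ of codimension one, and no nonzero constant lies in $\mathrm{Im}(DL)$ (evaluate $DL\psi=-h^{p\bar q}\psi_{p\bar q}$ at a minimum of $\psi$), so $C^\al=\mathrm{Im}(DL)\oplus\RR$ and $(\psi,a)\mapsto DL\psi-na$ is an isomorphism; note also that $c_t$ is not literally ``free'' (it is fixed by cohomology) --- rather, the constant is treated as an unknown in the implicit function theorem and then automatically coincides with $c_t$. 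Two further comparisons: your compactness argument giving a uniform positive cone bound on all of $[0,1]$ is cleaner than the paper's, whose estimate (Lemma \ref{lem:2order}) degenerates like $B\ee t$ as $t\to 0$ and therefore only holds for $t\geq t_2>0$, with openness at $t=0$ used to bridge the initial interval; on the other hand, the estimates you cite from \cite{[W1]}, \cite{[W2]}, \cite{[SW]} are proved there along the parabolic $J$-flow, so a complete write-up must redo the maximum-principle and Moser-iteration arguments for the elliptic equation along the path --- this is precisely the content of the paper's Lemmas \ref{lem:2order} and \ref{lem:zero}, and while the adaptation is routine once your uniform cone bound is in place, it cannot simply be cited.
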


In a forthcoming paper,
we will generalize the results in this paper
to the properness of the log $K$-energy and discuss the existence of critical points of
the $J$ flow with conical singularities. Our method can also be applied to
study the modified $K$-energy for extremal K\"ahler metrics and we
will discuss this elsewhere. \\

{\bf Acknowledgements}: The authors would like to thank Professor Julius Ross for
bringing our attention to this problem  and
 many helpful discussions. We are also grateful to the anonymous referee for valuable comments and suggestions. \\

\section{Preliminaries}
In this section, we recall some basic facts on $J$-flow. We follow
the notations in \cite{[SW]}. Let $(X, \oo)$ be a $n$-dimensional compact K\"ahler manifold with a
K\"ahler form $\oo=\frac{\sqrt -1}{2}g_{i\bar j}dz^i\wedge d\bar z^j$, and $\chi_0$ another K\"ahler form on $X.$ We
denote by $\cH(X, \chi_0)$ the space of K\"ahler potentials
$$\cH(X, \chi_0)=\{\varphi\in C^{\infty}(X, \RR)\;|\; \chi_{\varphi}=\chi_0+\pbp \varphi>0\}.$$
The $J$-flow is defined by \beq \pd {\varphi}t=c-\frac {\oo\wedge
\chi_{\varphi}^{n-1}}{\chi_{\varphi}^n},\quad
\varphi|_{t=0}=\varphi_0\in \cH, \label{eq:001}\eeq where $c$ is a
constant defined by \beq c=\frac {[\oo]\cdot
[\chi_0]^{n-1}}{[\chi_0]^n}. \label{eq:c}\eeq A critical point of
$J$-flow is a K\"ahler form $\chi$ satisfying \beq \oo\wedge
\chi^{n-1}=c\chi^n. \label{eq:002} \eeq Donaldson \cite{[Don1]}
showed that a necessary condition for the existence of the critical
metrics (\ref{eq:002}) is $[c\chi_0-\oo]>0$, and Chen \cite{[Chen1]}
showed that it is also sufficient in complex dimension $2$. In a
series of papers \cite{[W1]}\cite{[SW]} Weinkove and Song-Weinkove
obtain a sufficient and necessary condition for any dimension:

\begin{theo}\label{theo:SW}(cf. \cite{[SW]})The following conditions are equivalent:
\begin{enumerate}
  \item[(1).] There is a metric $\chi'\in [\chi_0]$ satisfying
  \beq
(nc\chi'-(n-1)\oo)\wedge \chi'^{n-2}>0. \label{eq:SW}
  \eeq
  \item[(2).] For any initial data $\varphi_0\in \cH$, the $J$-flow
  (\ref{eq:001}) converges smoothly to $\varphi_{\infty}\in \cH$
  with the limit metric $\chi_{\infty}$ satisfying (\ref{eq:002}).
  \item[(3).] There is a smooth K\"ahler metric $\chi\in [\chi_0]$
  satisfying the equation (\ref{eq:002}).
\end{enumerate}

\end{theo}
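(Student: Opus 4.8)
The plan is to establish the three-way equivalence by proving the cycle $(1)\Rightarrow(2)\Rightarrow(3)\Rightarrow(1)$. Two of these implications are essentially immediate. For $(2)\Rightarrow(3)$, if the flow (\ref{eq:001}) converges smoothly to $\varphi_{\infty}\in\cH$ whose limit metric $\chi_{\infty}=\chi_0+\pbp\varphi_{\infty}$ satisfies (\ref{eq:002}), then $\chi_{\infty}$ is precisely the smooth critical K\"ahler metric required by (3). For $(3)\Rightarrow(1)$ I would argue pointwise: suppose $\chi\in[\chi_0]$ solves $\oo\wedge\chi^{n-1}=c\chi^n$, fix a point, and choose holomorphic coordinates diagonalizing $\chi$ so that $\chi$ is the standard form and $\oo=\diag(\la_1,\dots,\la_n)$ with every $\la_j>0$. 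A direct computation gives $\frac{\oo\wedge\chi^{n-1}}{\chi^n}=\frac1n\sum_j\la_j$, so (\ref{eq:002}) reads $\sum_j\la_j=nc$, while the $(n-1,n-1)$-form $(nc\chi-(n-1)\oo)\wedge\chi^{n-2}$ is positive exactly when $nc-\sum_{k\neq m}\la_k>0$ for every $m$; substituting $\sum_k\la_k=nc$ reduces this to $\la_m>0$, which holds since $\oo$ is K\"ahler. Thus $\chi$ itself satisfies (\ref{eq:SW}) and (1) follows.

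The substance of the theorem is $(1)\Rightarrow(2)$. The right-hand side of (\ref{eq:001}) equals $c-\frac1n\tr_{\chi_{\varphi}}\oo$, whose linearization in $\varphi$ is the second-order operator with coefficient tensor $\chi_{\varphi}^{i\bar k}\chi_{\varphi}^{l\bar j}g_{i\bar j}$; this tensor is positive definite, so (\ref{eq:001}) is strictly parabolic and admits a unique short-time smooth solution for any $\varphi_0\in\cH$. The real task is to extract a priori estimates uniform in $t$ that upgrade short-time existence to global existence and convergence, and throughout I would use the fixed metric $\chi'$ furnished by (\ref{eq:SW}) as a subsolution. First I would establish a uniform $C^0$ bound on $\osc\,\varphi$ by combining the monotonicity of $\hat J_{\oo,\chi_0}$ along the flow with a maximum-principle/Harnack argument. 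The decisive step is the second-order estimate, namely a uniform upper bound on $\tr_{\chi_0}\chi_{\varphi}$: here I would evolve a test quantity of the form $\log\tr_{\oo}\chi_{\varphi}-A\varphi$ and apply the maximum principle, absorbing the troublesome third-order terms via the concavity of the operator. It is exactly at this point that condition (\ref{eq:SW}), encoded through the subsolution $\chi'$, supplies the favorable sign needed to close the estimate; this is the main obstacle and the essential role of the cone hypothesis.

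Once $\tr_{\chi_0}\chi_{\varphi}$ is bounded above, the $C^0$ bound forces $\chi_{\varphi}$ to be uniformly equivalent to $\chi_0$, so (\ref{eq:001}) becomes uniformly parabolic and the concavity of the operator allows Evans--Krylov theory to yield a uniform $C^{2,\alpha}$ bound; Schauder bootstrapping then gives uniform $C^{\infty}$ estimates on compact time intervals, and hence long-time existence. For convergence I would show that $\pd{\varphi}{t}\ri 0$ in $C^0$, using the monotonicity of $\hat J_{\oo,\chi_0}$ together with the a priori bounds, extract a smooth subsequential limit $\chi_{\infty}$ solving (\ref{eq:002}), and finally promote subsequential convergence to full smooth convergence by invoking the strict convexity of $\hat J_{\oo,\chi_0}$ along $C^{1,1}$-geodesics recalled in the introduction, which pins down the limit uniquely. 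This completes $(1)\Rightarrow(2)$ and closes the cycle.
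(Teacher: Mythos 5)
First, a contextual point: the paper does not actually prove Theorem \ref{theo:SW} --- it is quoted from Song--Weinkove \cite{[SW]} --- and the paper's own contribution to this circle of ideas is Theorem \ref{theo:main4} in Section 6, namely an \emph{elliptic} continuity-method proof of the existence implication $(1)\Rightarrow(3)$, with openness obtained from the invertibility of the self-adjoint linearized operator and closedness from the a priori estimates of Lemma \ref{lem:2order} and Lemma \ref{lem:zero}. Your route is instead the original parabolic route of \cite{[SW]}, \cite{[W1]}, \cite{[W2]}, via the cycle $(1)\Rightarrow(2)\Rightarrow(3)\Rightarrow(1)$. The easy arrows are fine: $(2)\Rightarrow(3)$ is immediate, and your pointwise computation for $(3)\Rightarrow(1)$ is correct --- with $\chi$ standard and $\oo=\diag(\la_1,\dots,\la_n)$, positivity of $(nc\chi-(n-1)\oo)\wedge\chi^{n-2}$ is equivalent to $nc-\sum_{k\neq m}\la_k>0$ for each $m$, which under $\sum_k\la_k=nc$ is exactly $\la_m>0$; so the critical metric is itself the required $\chi'$. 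Your outline of $(1)\Rightarrow(2)$ also matches \cite{[SW]} in its main lines (parabolicity and short-time existence, the test quantity $\log \La_{\oo}\chi_{\varphi}-A\varphi$ with the subsolution supplying the sign, Evans--Krylov for the concave operator, convergence via the gradient-flow structure and convexity of $\hat J$ along $C^{1,1}$ geodesics).

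There is, however, one genuine gap: your $C^0$ step, as stated, would not close. You propose to establish the uniform oscillation bound \emph{first}, ``by combining the monotonicity of $\hat J_{\oo,\chi_0}$ along the flow with a maximum-principle/Harnack argument.'' Monotonicity of $\hat J$ only bounds a functional that does not control $\osc\varphi$, and the parabolic maximum principle applied to $\partial\varphi/\partial t$ (which satisfies a linear heat-type equation) yields only $|\partial\varphi/\partial t|\leq C$, hence at best linear-in-time growth of $|\varphi|$, not a uniform bound. In \cite{[SW]}, \cite{[W1]}, \cite{[W2]} --- and in the paper's own elliptic analogues, Lemmas \ref{lem:2order} and \ref{lem:zero} --- the logical order is the reverse of yours: one first proves the \emph{partial} second-order estimate $\La_{\oo}\chi_{\varphi}\leq C\,e^{A(\varphi-\inf_X\varphi)}$, exponential in the unnormalized potential, where (\ref{eq:SW}) enters through the subsolution (and the third-order terms are handled by the Cauchy--Schwarz inequality of Lemma 3.2 of \cite{[W1]}, not by concavity per se); only then does a Moser iteration applied to $u=e^{-N(\varphi-\sup_X\varphi)}$, using the uniform Sobolev constant, produce the oscillation bound, after which the two estimates combine to give uniform equivalence of $\chi_{\varphi}$ and $\chi_0$. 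With that ordering corrected, the remainder of your sketch (uniform parabolicity, Evans--Krylov, Schauder bootstrap, and convergence pinned down by the strict convexity of $\hat J_{\oo,\chi_0}$) is the standard argument, but it is the Song--Weinkove flow proof rather than the paper's continuity-method proof, which trades the parabolic machinery for an openness argument while reusing exactly the same two estimates.
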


The convergence of the $J$-flow can be used to determine in which
K\"ahler class the $K$-energy is proper or bounded from below.
Note that the $J$-flow is the gradient flow of the functional
\beq
\hat J_{\oo, \chi_0}(\varphi)=\int_0^1\,\int_X\;\pd {\varphi}t
(\oo\wedge \chi_{\varphi}^{n-1}-c\chi_{\varphi}^n)\frac
{dt}{(n-1)!}. \label{eq:003}
\eeq
When $\oo$ is a positive $(1, 1)$ form, the functional $\hat J_{\oo,
\chi_0}$ is strictly convex along any $C^{1, 1}$ geodesics(cf.
\cite{[Chen1]}). Therefore, under the assumption (1) of Theorem
\ref{theo:SW}, the critical point of $\hat J_{\oo,
\chi_0}$ exists and  $\hat J_{\oo,
\chi_0}$ is bounded from below.
When $c_1(X)<0$, we can choose   $\oo=-Ric(\chi_0)>0$, then the
 $K$-energy can be written as
\beq
\mu_{\chi_0}(\varphi)=\int_X\;\log \frac
{\chi_{\varphi}^n}{\chi_0^n}\,\frac {\chi_{\varphi}^n}{n!}+\hat J_{\oo,
\chi_0}(\varphi).
\eeq where $\oo=-Ric(\chi_0)>0$. Since the first term of this formula is always proper (see Lemma 4.1 of \cite{[SW]}), one can conclude that the $K$-energy is proper on $[\chi_0]$, provided (\ref{eq:SW}) holds for $[\oo]=-\pi c_1(X)$.\\

\section{Proofs of Theorem \ref{theo:main1} and Corollary \ref{cor:main2}-\ref{cor:main4} }
In this section, we prove  Theorem \ref{theo:main1} and Corollary
\ref{cor:main2}-\ref{cor:main4}.

\begin{proof}[Proof of Theorem \ref{theo:main1}]
We focus on the $G=\{1\}$ case, the proof in the general case is
identical. Recall the Aubin-Yau functionals \beqs
I_{\chi_0}(\varphi)&=&\int_X\;\varphi(\frac {\chi_0^n}{n!}-\frac {\chi_{\varphi}^n}{n!}),\\
J_{\chi_0}(\varphi)&=&\int_0^1dt\int_X\;\pd
{\varphi_t}t(\frac {\chi_0^n}{n!}-\frac {\chi_{\varphi}^n}{n!}).
\eeqs Direct calculation shows that
\beqs
I_{\chi_0}(\varphi)-J_{\chi_0}(\varphi)&=&-\int_0^1\,dt\int_X\;\pd
{\varphi}t \Delta_{\chi_{\varphi}}\varphi\,\frac {\chi_{\varphi}^n}{n!}\\
&=&-\int_0^1\,\int_X\;\pd {\varphi}t (\chi_{\varphi}^n-\chi_0\wedge
\chi_{\varphi}^{n-1})\frac {dt}{(n-1)!}. \eeqs
By the assumption (2), there exists $\chi_1\in [\chi_0]$
such that $\oo_1:=-Ric(\chi_1)$ satisfies
\beq \oo_1+\ee\chi_0>0.\label{eq:106}\eeq
By the assumption (3), there exists $\td  \chi', \td
\chi \in [\chi_0] $ such that
\beq
(nc+\ee)\td \chi'-(n-1)\td \oo>0, \nonumber
\eeq where $\td \oo:=-Ric(\td \chi)$ and
\beq
c=\frac{-\pi c_1(X)\cdot [\chi_0]^{n-1}}{[\chi_0]^n}.\nonumber
\eeq
Note that $c+\ee>0$ by assumption (2).
 Set
$$\chi':=\frac 1{n(c+\ee)}\Big((nc+\ee)\td \chi'+(n-1)\ee \chi_0+(n-1)\pbp \log
\frac { \chi_1^n}{\td\chi^n}\Big).$$
Then $\chi'\in [\chi_0]$ and $\chi'$ satisfies
\beq
n(c+\ee)\chi' -(n-1)(\oo_1+\ee\chi_0)=(nc+\ee)\td \chi'-(n-1)\td \oo>0.
\label{eq:108}
\eeq
In particular, $\chi'>0$.
We define the
modified $\hat J$ functional by \beqs \hat J^{\ee}_{\oo_1, \chi_0
}(\varphi)&=&\hat J_{\oo_1, \chi_0 }(\varphi)+
\ee \Big(I_{\chi_0}(\varphi)-J_{\chi_0}(\varphi)\Big)\\
&=&\int_0^1\,\int_X\;\pd {\varphi}t\Big((\oo_1+\ee \chi_0)\wedge \chi_{\varphi}^{n-1}
-(c+\ee)\chi_{\varphi}^n\Big)\frac {dt}{(n-1)!},
\eeqs which is exactly the functional $\hat J_{\oo_1+\ee \chi_0,
\chi_0}$ defined by (\ref{eq:003}). Thus, by Chen's result in \cite{[Chen1]}
if   there is a K\"ahler
metric $\chi$ satisfying
\beq
(\oo_1+\ee\chi_0)\wedge \chi^{n-1}=(c+\ee)\chi^n, \label{eq:004}
\eeq then $\hat J^{\ee}_{\oo_1, \chi_0 }$ is bounded from below on
$\cH(X, \chi_0).$ By Theorem \ref{theo:SW} the critical metric (\ref{eq:004})
exists if there exists a K\"ahler metric $\chi' \in [\chi_0]$ such
that
\beq
\Big(n(c+\ee)\chi' -(n-1)(\oo_1+\ee\chi_0)\Big)\wedge \chi'^{\,n-2}>0. \label{eq:009}
\eeq
Clearly, (\ref{eq:009}) can be implied by (\ref{eq:108}).
Therefore, if (\ref{eq:106}) and (\ref{eq:108}) hold, then $\hat J^{\ee}_{\oo_1, \chi_0 }$ is bounded from below on
$\cH(X, \chi_0)$ and we have
\beq  \hat J_{\oo_1, \chi_0 }(\varphi)\geq -
\ee \Big(I_{\chi_0}(\varphi)-J_{\chi_0}(\varphi)\Big)-C,\quad \forall\;\varphi\in
\cH(X, \chi_0). \label{eq:011}\eeq\\

Next, we claim that for $\oo:=-Ric(\chi_0)$, there is a constant $C(
\chi_1, \chi_0)$ such that for any $\varphi\in \cH(X, \chi_0)$
\beq
|\hat J_{\oo, \chi_0}(\varphi)-\hat J_{\oo_1, \chi_0}(\varphi)|\leq
C( \chi_1, \chi_0). \label{eq:107}
\eeq
In fact, by the explicit expression of the $\hat J$ functional from
\cite{[Chen1]} we have
\beqs
\hat J_{\oo, \chi_0}(\varphi)&=&\sum_{p=0}^{n-1}\,\frac
1{(p+1)!(n-p-1)!}\int_X\;\varphi\,\oo\wedge \chi_0^{n-p-1}\wedge
(\pbps \varphi)^{p}-nc\int_0^1\,dt\int_X\;\pd {\varphi_t}t\,\frac {\chi_{\varphi_t}^n}{n!}\\
&=&\sum_{p=0}^{n-1}\,c_p\int_X\; \varphi \,\oo\wedge\chi_0^{n-p-1}\wedge
(\chi_{\varphi}-\chi_0)^{p}-nc\int_0^1\,dt\int_X\;\pd {\varphi_t}t\,\frac {\chi_{\varphi_t}^n}{n!}\\
&=&\sum_{p=0}^{n-1}\,c_p'\int_X\; \varphi \,\oo\wedge\chi_0^{n-p-1}\wedge
\chi_{\varphi}^{p}-nc\int_0^1\,dt\int_X\;\pd {\varphi_t}t\,\frac {\chi_{\varphi_t}^n}{n!},
\eeqs where $c_p, c_p'$ are universal constants. Since $\oo_1-\oo=-\pbp
f$ where $f=\log\frac {\chi_{1}^n}{\chi_0^n}$, we have
\beqs
 |\hat J_{\oo, \chi_0}(\varphi)-\hat J_{\oo_1,
 \chi_0}(\varphi)|&\leq&
 \Big|\sum_{p=0}^{n-1}\,\frac {c_p'}2\int_X\; \varphi \,\pbps f\wedge\chi_0^{n-p-1}\wedge
\chi_{\varphi}^{p}\Big|\\
&=&\Big|\sum_{p=0}^{n-1}\, c_p' \int_X\; f \,(\chi_{\varphi}-\chi_0)\wedge\chi_0^{n-p-1}\wedge
\chi_{\varphi}^{p}\Big|\leq C\|f\|_{C^0}.
\eeqs
Therefore, (\ref{eq:107}) is proved. \\

Now using Tian's $\al$-invariant we have (see Lemma 4.1 of \cite{[SW]} )
\beqn\int_X\;\log \frac
{\chi_{\varphi}^n}{\chi_0^n}\,\frac {\chi_{\varphi}^n}{n!}&\geq&
 \al  I_{\chi_0}(\varphi)-C\nonumber\\&\geq& \frac {n+1}{n}\al \cdot (I_{\chi_0}(\varphi)
-J_{\chi_0}(\varphi))-C,\quad \forall\,\varphi\in \cH
\label{eq:012}\eeqn
for any $\al\in (0, \al_{[\chi_0]}(X)).$ Combining the inequalities
(\ref{eq:011})-(\ref{eq:012}) we have
\beqs
\mu_{\chi_0}(\varphi)&=&\int_X\;\log \frac
{\chi_{\varphi}^n}{\chi_0^n}\,\frac {\chi_{\varphi}^n}{n!}+\hat J_{\oo, \chi_0}(\varphi)\\
&\geq &\int_X\;\log \frac
{\chi_{\varphi}^n}{\chi_0^n}\,\frac {\chi_{\varphi}^n}{n!}+\hat J_{\oo_1,
\chi_0}(\varphi)-C(\chi_0, \chi_1)\\&\geq &
 \Big(\frac
{n+1}{n}\al-\ee\Big)\Big(I_{\chi_0}(\varphi)-J_{\chi_0}(\varphi)\Big)-C.\eeqs
Therefore, if $\al_X([\chi_0])>\frac n{n+1}\ee$ then the $K$ energy
is proper.

\end{proof}

\begin{rem}We see from the above proof that the $K$-energy is proper
if (\ref{eq:106}) and (\ref{eq:009}) hold. However, we are unable to
show that (\ref{eq:009}) holds if there exists a K\"ahler metric $\chi'\in [\chi_0]$
such that
 \beq
\Big(-n\frac {\pi c_1(X)\cdot
[\chi_0]^{n-1}}{[\chi_0]^n}\chi'-(n-1)\oo+\ee \chi'\Big)\wedge
\chi'^{\,n-2}>0.
\nonumber
\eeq If it were true, then Song-Weinkove's question would be
answered as a corollary.\\

\end{rem}

\begin{proof}[Proof of Corollary \ref{cor:main2}] Let $\ee\in (0, \frac {n+1}{n}\al_X([\chi_0])).$
Since the manifold $X$ has negative first Chern class, the
condition  $(2)$ in  Theorem \ref{theo:main1}
automatically holds. The third condition in  Theorem \ref{theo:main1}
follows directly from (\ref{eq:100}). Thus, the corollary is
proved.

\end{proof}

\begin{proof}[Proof of Corollary \ref{cor:main3}] Let $[\chi_0]=\pi c_1(X)$. By the assumption
 $\al_X([\chi_0])>\frac n{n+1}$,
we can  choose $\ee\in (1, \frac {n+1}{n}\al_X([\chi_0]))$.
Therefore, the three conditions of  Theorem \ref{theo:main1} hold
and the corollary is proved.

\end{proof}

\begin{proof}[Proof of Corollary \ref{cor:main4}]
In complex dimension 2, the $J$ flow converges smoothly if and only
if the inequality
\beq -2\frac {c_1(X)\cdot
[\chi_0]^{n-1}}{[\chi_0]^n}\chi'+c_1(X)>0 \label{eq:101}\eeq
holds for some $\chi'\in [\chi_0]$. By Corollary \ref{cor:main3},
the $K$-energy is still proper on $\cH(X, \chi_0)$ if the inequality
(\ref{eq:101}) is not strictly. Therefore, on any K\"ahler class
lying on the boundary of the cone defined by  (\ref{eq:101}), the
$K$-energy is proper but the $J$-flow doesn't converge smoothly.

\end{proof}

\section{Toric varieties}\label{sec:toric}
In this section, we apply Theorem \ref{theo:main1} to  projective toric
manifolds. First we recall some basic facts on toric varieties from Fulton's book \cite{[Ful]}. Let $N$ be a lattice of rank $n$, $M=Hom(N,\mathbb{Z})$ is its dual. The complex torus group $T$ is defined to be $T=N\otimes_\ZZ \mathbb{C}^*=Hom(M,\mathbb{C}^*)$, and we have the real torus group $T_R=N\otimes_\ZZ\mathbb{S}^1=Hom(M,\mathbb{S}^1)$. A complete toric variety $X_\Delta$ is defined by a  fan $\Delta$ in $N_\RR=N\otimes_\ZZ\RR$, consists of strongly convex rational polyhedral cones, such that the union of these cones is the whole of $N_\RR$. We assume each cone is generated by a subset of a $\ZZ$-basis of $N$, then $X_\Delta$ is smooth.
Each 1-dimensional cone $\rho_i$ of $\Delta$ corresponds to an irreducible divisor $D_i$ and it is well known that the Picard group of $X_\Delta$ is generated by these $D_i$'s.

Let $u_i\in N$ be the primitive generator of the 1-dimensional cone $\rho_i$. For any $T$-invariant divisor $D=\sum_i a_iD_i$, $a_i\in\ZZ$, we associate to it a piecewise linear function $\phi_D$ on $N_\RR$, defined by
$$\phi_D(u_i)=-a_i.$$
The divisor $D$ is ample if and only if $\phi_D$ is strictly concave \footnote{In Fulton's terminology, it is called ``convex". We choose this name according to the usual notation.} in the sense that for any $n$-dimensional cone $\sigma$ of $\Delta$, the graph of $\phi_D$ on the compliment of $\sigma$ is strictly under the graph of the linear function $u_\sigma$ whose restriction on $\sigma$ equals $\phi_D$.
For such a $D$, we can also associate to it a polytope $P_D$ in $M_\RR$, defined by
$$P_D:=\{m\in M_\RR\big| \langle m, v\rangle \geq \phi_D(v), \forall v\in N_\RR\}.$$
We have
$$H^0(X_\Delta,\mathcal{O}(D))\cong \bigoplus_{m\in P_D\cap M}\CC \chi^m,$$
where $\chi^m$ is the rational function on $X_\Delta$ defined by $m\in M$. If we restrict  $\chi^m$ to $(\CC^*)^n$, the $\chi^m$ has the form $z_1^{m_1}\cdots z_n^{m_n}$.

Besides the natural $T$-action, the toric manifold $X_\Delta$ also has some discrete symmetries from the fan. Let
\[\mathcal W= \{g\in GL(n,\ZZ) \big |\  g\  \text{ preserves}\ \Delta \}.\]
Since each $g\in \mathcal W$ is decided by a permutation of  $\{u_i\}$, so $\mathcal W$ is finite. Every $g$ induces a $\bar{g}\in Aut(X)$.   For a given ample divisor $D$, we consider the subgroup of $\mathcal W$ preserving the class of $D$:
$$\mathcal K_D:=\{g\in \mathcal W \big | \bar g^*D\sim D\},$$
where ``$\sim$" means ``linearly equivalent". Note that $\bar g^*D$ corresponds to the function $\phi_D\circ g$, so we have a combinatorial characterization of $\mathcal K_D$:
$$\mathcal K_D=\{g\in \mathcal W \big | \exists m\in M,\ s.t.\ \phi_D\circ g=\phi_D+\langle m, \cdot\rangle\}.$$

Let $G$ be the compact subgroup of $Aut(X)$ generated by $T_\RR$ and $\mathcal K_D$, we compute the $\alpha$-invariant $ \alpha_G(\pi [D])$, extending previous works of Song \cite{[So]} and Cheltsov-Shramov \cite{[ChSh]} in the toric Fano case. 

First, we make a normalization: assume the barycenter of $P_D$ is the origin of $M_{\RR}$. This is equivalent to taking power of the line bundle $\mathcal{O}(D)$ and change the divisor in its linear equivalent class. Since we are computing the $\alpha$-invariant, this does note lose any generality.
Under this assumption, for any $g\in \mathcal K_D$, the induced linear action $g^*$ on $M_\RR$ preserves $P_D$. Actually, by definition of $\mathcal K_D$, $g^*P_D$ is a translation of $P_D$. However, since $g^*$ is linear, the barycenter of $g^*P_D$ is also the origin. This implies $g^*P_D=P_D$. Now the result is:

\begin{theo}\label{theo:toric}The $\alpha_G$ invariant equals
$$\alpha_G(\pi [D])=\underset{i}{\min}\underset {y\in P_D^{\mathcal K}}{\min}\frac{1}{\langle  y,u_i\rangle + a_i},$$
where $P_D^{\mathcal K}$ is the set of  $\mathcal K_D$ (induced action on $M_\RR$) fixed points in $P_D$ .
\end{theo}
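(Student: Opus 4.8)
The plan is to reduce the computation of $\alpha_G(\pi[D])$ to an explicit variational problem over the polytope $P_D$, exploiting the toric structure to make all candidate potentials concrete. The key point is that on a toric manifold, $G$-invariant K\"ahler potentials correspond (away from the torus-fixed data) to convex functions on $N_\RR$ whose Legendre transform lives on $P_D$, and the integrability of $e^{-\alpha(\varphi-\sup\varphi)}$ is governed by the asymptotic behavior of $\varphi$ in each direction, which is controlled by the vertices of $P_D$. First I would fix the reference metric $\chi_0$ coming from the polytope $P_D$ in the standard way, so that on the open orbit $(\CC^*)^n$ the potential is written in logarithmic coordinates $x_i = \log|z_i|^2$, and a $G$-invariant potential $\varphi$ becomes a convex function $u$ on $N_\RR \cong \RR^n$ that is invariant under the induced $\mathcal K_D$-action and whose gradient image is $P_D$.

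Next I would analyze the integral $\int_X e^{-\alpha(\varphi - \sup\varphi)}\chi_0^n$ in these coordinates. The measure $\chi_0^n$ pushes forward, and the supremum of $\varphi$ is attained asymptotically along the directions dual to the vertices of $P_D$; the local integrability near each toric divisor $D_i$ (corresponding to the primitive generator $u_i$) is what forces the bound. For each one-dimensional cone generator $u_i$, the worst-case behavior is a potential whose Legendre transform concentrates at the vertex of $P_D$ in the direction maximizing the relevant linear functional, and the condition for finiteness of the integral translates, via a direct change of variables and estimation of the rate at which $\varphi$ grows linearly in the $u_i$-direction, into the inequality $\alpha(\langle y, u_i\rangle + a_i) < 1$ where $y$ ranges over the attainable asymptotic slopes. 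Restricting to $G$-invariant potentials forces the relevant $y$ to lie in the fixed-point set $P_D^{\mathcal K}$, because any $\mathcal K_D$-invariant convex function has gradient image and asymptotic directions constrained to the $\mathcal K_D$-fixed locus. This yields the upper bound $\alpha_G(\pi[D]) \le \min_i \min_{y\in P_D^{\mathcal K}} \frac{1}{\langle y,u_i\rangle + a_i}$.

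For the reverse inequality I would show that this value is actually achieved, i.e., that for any $\alpha$ strictly below it one has a uniform integral bound over all $G$-invariant potentials. The strategy is to invoke the normalization (barycenter of $P_D$ at the origin) together with a compactness/concavity argument: after normalizing $\sup\varphi = 0$, the convexity of $u$ and the $\mathcal K_D$-symmetry give a uniform linear lower bound $\varphi \ge \sum_i (\langle y_*, u_i\rangle + a_i) \, x_i^{+} - C$ in each asymptotic sector, with $y_*$ forced into $P_D^{\mathcal K}$, so the integrand is dominated by an explicitly integrable exponential whenever $\alpha$ beats the stated minimum. The uniformity of the constant $C$ over the whole family of potentials is the delicate part, and I would handle it by the standard toric normalization that pins down $u$ up to the addition of linear (hence globally bounded after the barycenter condition) terms.

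The main obstacle I expect is the reverse (lower bound) direction, specifically establishing the \emph{uniform} integrability constant over the infinite-dimensional family $\cH^G(X,\chi_0)$ rather than for a single potential. Pointwise one can read off the exponents from the vertices of $P_D$, but controlling the constant $C$ uniformly requires exploiting that the barycenter normalization plus $\mathcal K_D$-invariance leaves only finitely much freedom in the leading asymptotics; this is where the restriction to $P_D^{\mathcal K}$ does the real work, since without the symmetry the supremum could be approached along directions dual to arbitrary vertices of $P_D$ and the naive minimum over all of $P_D$ would be smaller. Matching the two bounds to conclude that the fixed-point set $P_D^{\mathcal K}$ is exactly the right domain of optimization is the crux of the argument.
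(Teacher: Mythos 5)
Your proposed route is genuinely different from the paper's: you work analytically, reducing $G$-invariant potentials to $\mathcal K_D$-invariant convex functions on $N_\RR$ with gradient image in $P_D$ and trying to read off the integrability threshold from asymptotic slopes. The paper instead argues algebraically: it invokes Demailly's theorem expressing $\alpha_G(\pi[D])$ as the infimum of log canonical thresholds $\mathrm{lct}(\frac{1}{k}|\Sigma|)$ over $G$-invariant linear systems $|\Sigma|\subset |kD|$; since the torus acts on $H^0(X,kD)=\bigoplus_{m\in kP_D\cap M}\CC s_m$ with distinct characters, an irreducible invariant system is spanned by the monomial sections over a single $\mathcal K_D$-orbit $\Gamma$; tensoring these sections over the orbit (H\"older's inequality shows this only decreases the lct) produces a one-dimensional invariant system whose divisor is supported on the simple-normal-crossing toric boundary with multiplicities $\langle \bar m,u_i\rangle+a_i$, where $\bar m$ is the orbit \emph{barycenter} --- automatically a $\mathcal K_D$-fixed point --- and the SNC lct computation then gives the stated formula. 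Note that all of the delicate uniform-integrability analysis is absorbed into Demailly's theorem; the rest is combinatorics.

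The genuine gap in your proposal sits exactly at the step you yourself call the crux. The claim that ``any $\mathcal K_D$-invariant convex function has gradient image and asymptotic directions constrained to the $\mathcal K_D$-fixed locus'' is false: the gradient image of an invariant convex function is merely a $\mathcal K_D$-invariant \emph{subset} of $P_D$. Concretely, for any non-fixed $y\in P_D$ the function $u(x)=\max_{g\in\mathcal K_D}\langle g^{*}y,x\rangle$ (suitably normalized against the reference potential) is $\mathcal K_D$-invariant and convex, and its asymptotic slopes form the full orbit of $y$, which is disjoint from $P_D^{\mathcal K}$. Since both your upper bound (``the relevant $y$ is forced into $P_D^{\mathcal K}$'') and your uniform lower bound $\varphi\ge\sum_i(\langle y_*,u_i\rangle+a_i)x_i^{+}-C$ rest on this constraint, both directions break. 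The true mechanism by which fixed points enter is not a constraint on slopes but an \emph{averaging over orbits}: for an invariant potential whose singularities are modelled on the orbit $\{g^{*}y\}$, the effective multiplicity along each toric divisor $D_i$ is the orbit average $\frac{1}{N}\sum_{g}\bigl(\langle g^{*}y,u_i\rangle+a_i\bigr)=\langle\bar y,u_i\rangle+a_i$, and it is the barycenter $\bar y$, not any attainable slope, that lies in $P_D^{\mathcal K}$. In the paper this is precisely the tensoring-over-the-orbit step with H\"older's inequality; an analytic repair of your argument would need the corresponding Jensen/H\"older comparison in the convex picture, plus a genuine uniformity statement over the infinite-dimensional family (e.g.\ Demailly--Koll\'ar semicontinuity or Demailly's lct formula itself), since the barycenter normalization together with $\sup\varphi=0$ does not pin down the asymptotics of an invariant potential to finitely many parameters.
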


\begin{proof}
To prove this, we use the result of J.-P. Demailly \cite{[Dem]}, saying that

$$\alpha_G(\pi [D])=\underset{k\in \mathbb{Z}^+}{\inf}\underset{|\Sigma|\subset|kD|}{\inf}lct(\frac{1}{k}|\Sigma|)$$
where the second infimum is taken for all $G$ invariant linear systems. If $\Sigma_1\subset\Sigma_2\subset|kD|$, obviously $lct(\frac{1}{k}|\Sigma_1|)\leq lct(\frac{1}{k}|\Sigma_2|)$, so we only need take all $G$ invariant and irreducible linear systems $|\Sigma|$.

Note that $H^0(X,kD)=span\{s_m\ | \ m \in kP_D\cap M\}$ is just the decomposition into one dimensional invariant subspaces of the $T$-action, and  the torus acts on these lines with different characters. Take a $G$ invariant linear system $\Sigma\subset H^0(X,kD)$, $\Sigma$ must be spanned by $\{s_m, m \in \Gamma \}$ for some $\Gamma \subset kP_D\cap M$, and  $\Gamma $ is $\mathcal K_D$ invariant. We can assume $\Sigma $ is irreducible, so $\Gamma$ is an orbit of the $\mathcal K_D$ action. Denoted $\#\Gamma =N$, then $s=\underset{m \in \Gamma}{\otimes}s_m \in H^0(X,kND)$, and it spans a one dimensional $G$-invariant linear system. Moreover, we have $lct(\frac{1}{kN}(s))\leq lct(\frac{1}{k}|\Sigma|)$ by H\"older inequality. So without loss of generality, we can take $\Sigma$ to be one dimensional in the following.

Let $s \in H^0(X,kD)$. Assume $s$ corresponds to the lattice point $m \in kP_D\cap M$, and $m$ is fixed by $\mathcal K_D$ . Now the  divisor of $s$ is given by
$$(s)=\sum_i (\langle m, u_i \rangle + ka_i) D_i.$$
Since $X$ is smooth , $D_i$'s have simple normal crossing intersections with each other. So we have $$lct(\frac{1}{k}(s))= \underset{i}{\min}\frac{k}{\langle m, u_i \rangle + ka_i}.$$ From this and the above discussion, we have
\[\alpha_G(\pi[D])=\underset{k\in \mathbb{Z}^+}{\inf}\underset{\frac{m}{k} \in P_D\cap \frac{1}{k}M}{\inf}\underset{i}{\min}\frac{1}{\langle \frac{m}{k},u_i \rangle + a_i}=\underset{i}{\min}\underset {y\in P_D^{\mathcal K}}{\min}\frac{1}{\langle y,u_i\rangle + a_i}.\]
\end{proof}

Now we consider a concrete example: the blowup of $\CC P^2$ at three general points. The fan of $M$ is generated by the following primitive vectors:
$$u_1=(1,0),\ u_2=(1,1),\ u_3=(0,1),\ u_4=(0,-1),\ u_5=(-1,-1),\ u_6=(0,-1).$$
They correspond to all the $(-1)$-curves $D_1,\dots,D_6$ on $M$.  The intersection numbers satisfy :
$$D_i\cdot D_i=-1,\ D_i\cdot D_{i+1}=1, i=1,\dots,6,$$
where $D_7=D_1$ is understood. Note also that the anti-canonical divisor is given by $K^{-1}=D_1+\cdots+D_6$.
The bundle we choose is $L_\la=(D_1+D_3+D_5)+\la(D_2+D_4+D_6)$, where $\la\in\QQ$. This is the same class as we mentioned in the introduction. Actually if we blow down $D_1, D_3$ and $D_5$, we will get $\CC P^2$, and the image of $D_2, D_4, D_6$ are lines.
\begin{prop}Under the above assumptions, if $\frac{5}{6}<\la<\frac{6}{5}$ then the $K$-energy is
proper on the space of $G$-invariant potentials for the class $\pi c_1(L_{\la})$.
\end{prop}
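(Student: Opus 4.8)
The plan is to verify the three numerical hypotheses of Theorem~\ref{theo:main1} (in the $G$-invariant form) for the class $\pi c_1(L_\la)$ on this toric surface, where $n=2$. The strategy is to reduce each abstract condition to an explicit inequality in $\la$ using the intersection theory of the $(-1)$-curves $D_1,\dots,D_6$ and the $\alpha_G$-invariant formula of Theorem~\ref{theo:toric}. First I would record the basic intersection data: from $D_i\cdot D_i=-1$, $D_i\cdot D_{i+1}=1$ (cyclically) and $D_i\cdot D_j=0$ otherwise, together with $L_\la=(D_1+D_3+D_5)+\la(D_2+D_4+D_6)$ and $K_X=-(D_1+\cdots+D_6)$, one computes $L_\la^2$, $K_X\cdot L_\la$ and $K_X^2$ as explicit polynomials in $\la$. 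These feed directly into the constant $c=-\tfrac{\pi c_1(X)\cdot[\chi_0]}{[\chi_0]^2}=-\tfrac{K_X\cdot L_\la}{L_\la^2}$.

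Next I would treat conditions (2) and (3) of Theorem~\ref{theo:main1}. On a surface ($n=2$) the positivity conditions are positivity of $(1,1)$-classes, which by toric Nakai--Moishezon amount to checking strict positivity of the relevant intersection numbers against every $(-1)$-curve $D_i$ (equivalently, strict concavity of the associated piecewise-linear support functions). Condition (3) reads $\big((-2\tfrac{\pi c_1(X)\cdot[\chi_0]}{[\chi_0]^2}+\ee)[\chi_0]+\pi c_1(X)\big)>0$; substituting $c$ and pairing against each $D_i$ gives a finite list of linear inequalities in $\ee$ and $\la$, and by the $S_3$-type symmetry of the configuration (the cyclic symmetry exchanging the two triples of curves) these collapse to essentially two inequalities, one from an odd-indexed and one from an even-indexed curve. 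Condition (2), $\pi c_1(X)<\ee[\chi_0]$, similarly pairs against the $D_i$ to bound $\ee$ from below in terms of $\la$. The outcome should be a window of admissible $\ee$ whose existence is governed by exactly the constraint $\tfrac56<\la<\tfrac65$.

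The remaining and genuinely computational step is condition (1): I must produce $\ee$ with $0\le\ee<\tfrac{3}{2}\alpha_{X,G}(\pi c_1(L_\la))$ (here $\tfrac{n+1}{n}=\tfrac32$) that simultaneously satisfies the bounds extracted from (2) and (3). To evaluate $\alpha_{X,G}$ I would apply Theorem~\ref{theo:toric}: after normalizing so the barycenter of $P_{L_\la}$ is the origin, the symmetry group $\mathcal K_D$ coming from $\mathcal W$ acts on $M_\RR$, and the fixed-point set $P_{L_\la}^{\mathcal K}$ should reduce to the single point (the barycenter/origin) by the cyclic and reflection symmetries of this particular polytope. Then $\alpha_G=\min_i \tfrac{1}{\langle 0,u_i\rangle + a_i}=\min_i \tfrac{1}{a_i}$, giving $\alpha_G$ as an explicit function of $\la$ once the $a_i$ (coefficients of $L_\la$) are expressed after the barycenter shift.

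The main obstacle I expect is the bookkeeping for $\alpha_G$: correctly carrying out the barycenter normalization so that the fixed-point set truly collapses to one point, and then matching the resulting value $\tfrac32\alpha_G(\la)$ against the upper and lower bounds on $\ee$ produced by conditions (2) and (3). The whole argument succeeds precisely when the interval of admissible $\ee$ is nonempty, and the plan is to show that this interval is nonempty exactly on $(\tfrac56,\tfrac65)$; verifying that the $\alpha_G$-constraint does not shrink this window further (rather than one of the linear conditions being the binding one) is the delicate point, and I would check it by comparing the endpoints of the $\ee$-window from (3) with $\tfrac32\alpha_G(\la)$ at the extremal values of $\la$.
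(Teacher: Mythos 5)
Your proposal is correct and follows essentially the same route as the paper's proof: the paper simply fixes $\ee=1$ and instead rescales the polarization, writing $D=aL_\la$, which under the correspondence $a\leftrightarrow\ee$ is exactly your parametrization, since the three conditions of Theorem~\ref{theo:main1} are invariant under $([\chi_0],\ee)\mapsto(a[\chi_0],\ee/a)$. Both arguments compute $\alpha_G$ via Theorem~\ref{theo:toric} with $P_D^{\mathcal K}=\{0\}$ (here no barycenter shift is actually needed, since the order-three rotation in $\mathcal K_D$ preserves $P_{L_\la}$ exactly and fixes only the origin), reduce conditions (2) and (3) to intersection inequalities against the six $(-1)$-curves $D_i$, and verify that the resulting window of admissible $\ee$ (resp.\ $a$) is nonempty precisely when $\frac56<\la<\frac65$.
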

\begin{proof}
 For this class to be ample, we need the function $\phi_{L_\la}$ to be strictly concave, and this is easily seen to be
 $${1\over{2}}<\la<2.$$
Now we apply our theorem to this case.  Write $D=aL_\la$ for some positive $a$. The $\alpha$-invariant can be computed using our Theorem \ref{theo:toric}: Since the elements of $\mathcal K_D$ can be enumerated, one can check easily that $P_D^\mathcal K=\{0\in M_\RR\}$. So
$$\alpha_G(\pi[D])=\min\{\frac{1}{a},\frac{1}{\la a}\}.$$
 So the condition $\alpha_G>{2\over 3}$ translates to $0<a<{3\over 2}, 0<a<{3\over 2\la}$.
 Similarly, by considering the concexity of $\phi_{K+D}$, we can translate the condition $K+D>0$ into
 $$(2-\la)a>1,\quad (2\la-1)a>1.$$
 The last condition says that
$$(1-\underline R)D+K^{-1}>0,$$
where $\underline R$ is the mean value of the scalar curvature of the class $\pi [D]$.
Direct computation\footnote{Just compute the intersection number, or use the fact that $\underline R={Vol(\partial P_L)\over Vol(P_L)}$, where $Vol(\partial P_L)$ is computed using Donaldson's special boundary measure, see \cite{[Don2]} and\cite{[ZhZh]} .} shows that
$$\underline R=\frac{2(1+\la)}{a\la(4\la-1-\la^2)}.$$
Again, use the piecewise linear function $\phi_{(1-\underline R)D-K}$, we get the condition
$${2(\la+1)\over 4\la-1-\la^2}+{1\over \la-2}<a,\quad {2(\la+1)\over 4\la-1-\la^2}+{1\over 1-2\la}<a.$$
In conclusion, we have
$${1\over 2}<\la<2,$$
and
\beqs
\max\Big\{ \frac{1}{2-\la},\frac{1}{2\la-1},{2(\la+1)\over 4\la-1-\la^2}+{1\over \la-2},{2(\la+1)\over 4\la-1-\la^2}+{1\over 1-2\la} \Big\}<a\\<\min \{\frac{3}{2},\frac{3}{2\la}\}.
\eeqs
Then for any $\frac{5}{6}<\la<\frac{6}{5}$, we can find a suitable $a$ satisfying these conditions. So we can apply Theorem \ref{theo:main1} (or Corollary \ref{cor:main1} ) to conclude that the $K$-energy is
proper on the space of $G$-invariant potentials for the class $\pi c_1(aL_{\la})$ and hence on $\pi c_1(L_{\la})$. \\
\end{proof}

\section{An example of Dervan}
In this section, we would like to compare our result with that of Dervan \cite{[Der]} on the $\alpha$-invariant and K-stability for general polarizations on Fano manifolds. His sufficient condition involves the quantity
$$\mu(X,L):=\frac{-K_X\cdot L^{n-1}}{L^n}.$$
And he proved that if $\alpha_X(L)>\frac{n}{n+1}\mu(X,L)$ and $-K_X\geq \frac{n}{n+1}\mu(X,L)L$, then $(X,L)$ is K-stable. This condition is quite similar to ours, but at present we don't know whose condition is stronger.

We know compare our result with that of Dervan on a concrete example, the Del Pezzo surface $X$ of index one. It is the blowup of $\mathbb CP^2$ at eight points in general positions. Let $E_i$ be the exceptional divisors, and $L_\lambda=3H-\sum_{i=1}^7E_i-\lambda E_8$, then $L_1=-K_X$ and we set $L=aL_\lambda$,  where $a$ and $\lambda$ are both positive and $\la\in\QQ$. We know the exceptional curves on $X$ are $E_i$'s and the strictly transforms of following curves in $\mathbb CP^2$:
\begin{itemize}
\item lines through 2 points,
\item conics through 5 points,
\item cubics through 7 points, vanishing doubly at 1 of them,
\item quartics through 8 points, vanishing doubly at 3 points,
\item quintics through 8 points, vanishing doubly at 6 points,
\item sextics through 8 points, vanishing doubly at 7 points and triply at another point.
\end{itemize}

\begin{prop}Under the above assumptions, if
$\frac{4}{5}<\lambda <\frac{10}{9}$ the $K$-energy is proper on the
 K\"ahler class $\pi c_1(L_{\la})$.

\end{prop}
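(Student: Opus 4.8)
The plan is to deduce the proposition from Corollary \ref{cor:main1} (the polarized $n=2$ case of Theorem \ref{theo:main1}), exactly as in the toric example of the previous section: set $L=aL_\lambda$ for a positive rational $a$ to be chosen, and look for an auxiliary constant $\ee>0$ making the three hypotheses hold. First I would record the intersection numbers. Writing $L_\lambda=-K_X-(\lambda-1)E_8$ and using $(-K_X)^2=K_X^2=1$ (the surface has degree one), together with $(-K_X)\cdot E_8=1$ and $E_8^2=-1$ for the $(-1)$-curve $E_8$, one gets
\beq
L_\lambda^2=2-\lambda^2,\qquad K_X\cdot L_\lambda=\lambda-2,\qquad \mu:=\frac{-K_X\cdot L_\lambda}{L_\lambda^2}=\frac{2-\lambda}{2-\lambda^2}. \nonumber
\eeq
Since the $\alpha$-invariant, the class $K_X+\ee L$, and the class in condition (3) are all homogeneous under $L\mapsto aL$, the whole problem collapses to two parameters: $\lambda$ and the product $p:=\ee a$. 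In particular condition (3) reads $(p-2\mu)L_\lambda-K_X>0$.

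Next I would reduce conditions (2) and (3) to a single ampleness criterion. Substituting $L_\lambda=-K_X-(\lambda-1)E_8$ one finds
\beq
K_X+\ee L=(p-1)\Big(-K_X-\tfrac{p(\lambda-1)}{p-1}E_8\Big),\nonumber
\eeq
\beq
(p-2\mu)L_\lambda-K_X=(p-2\mu+1)\Big(-K_X-\tfrac{(p-2\mu)(\lambda-1)}{p-2\mu+1}E_8\Big),\nonumber
\eeq
so both conditions amount to the positivity of a class of the form $-K_X-sE_8$. The key geometric input is the elementary fact that on the degree-one del Pezzo surface $-K_X-sE_8$ is ample if and only if $-1<s<\tfrac13$. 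To see this I would test against the $(-1)$-curves: every $(-1)$-curve $C$ satisfies $-K_X\cdot C=1$, and the binding constraints come from two extremal curves, namely $E_8$ itself, with $E_8\cdot E_8=-1$, forcing $1+s>0$, and the strict transform of the plane sextic singular to order three at the eighth center, whose class is $6H-3E_8-2\sum_{i=1}^7E_i$ and which satisfies $E_8\cdot C=3$, forcing $1-3s>0$; one checks that no $(-1)$-curve meets $E_8$ with multiplicity greater than three and that the self-intersection $1-2s-s^2$ stays positive on $(-1,\tfrac13)$, so Nakai--Moishezon gives the claim. Feeding in $s=\tfrac{p(\lambda-1)}{p-1}$ and $s=\tfrac{(p-2\mu)(\lambda-1)}{p-2\mu+1}$, and noting that $p>1$ is forced, turns (2) and (3) into explicit inequalities between $p$ and $\lambda$; for $\lambda>1$ the binding one is $p>\tfrac{1}{4-3\lambda}$ coming from (2), while for $\lambda<1$ it is $p>\tfrac1\lambda$, again from (2), with the remaining inequalities non-binding in the relevant ranges.

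Finally, condition (1) requires the $\alpha$-invariant. Using the scaling $\alpha_X(\pi c_1(aL_\lambda))=a^{-1}\alpha_X(\pi c_1(L_\lambda))$, condition (1) reads $p<\tfrac32\,\alpha_X(\pi c_1(L_\lambda))$, and for the value of $\alpha_X(\pi c_1(L_\lambda))$ I would quote Dervan's computation in \cite{[Der]} for precisely this family, which in particular gives $\alpha_X(\pi c_1(L_\lambda))\ge 1$ for $\lambda\in(1,\tfrac{10}{9})$ and $\alpha_X(\pi c_1(L_{4/5}))=\tfrac56$. Combining the three inequalities, the system for $p$ is solvable exactly when $\tfrac32\,\alpha_X(\pi c_1(L_\lambda))$ exceeds the binding lower bound: for $\lambda>1$ this gives $\tfrac1{4-3\lambda}<\tfrac32$, i.e.\ $\lambda<\tfrac{10}{9}$, while for $\lambda<1$ it gives $\tfrac1\lambda<\tfrac32\cdot\tfrac56=\tfrac54$, i.e.\ $\lambda>\tfrac45$. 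Choosing any $p$ in the resulting nonempty interval and then any factorization $p=\ee a$ with $\ee>0$ finishes the proof via Corollary \ref{cor:main1}. I expect the main obstacle to be the sharp value of $\alpha_X(\pi c_1(L_\lambda))$: the ampleness bookkeeping is routine once the extremal $(-1)$-curves are identified, but the endpoints $\tfrac45$ and $\tfrac{10}{9}$ are produced only by the precise behaviour of $\alpha_X$ near $\lambda=1$, which is the genuinely hard input and for which I would rely on \cite{[Der]}. A secondary point needing care is checking that condition (3) is dominated by condition (2) throughout $\tfrac45<\lambda<\tfrac{10}{9}$, so that it imposes no further restriction.
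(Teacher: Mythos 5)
Your proposal is correct and follows essentially the same route as the paper: both apply Corollary \ref{cor:main1} to $L=aL_\lambda$, quote Dervan's bound $\alpha_X(\pi[L_\lambda])\geq\min\{1,\tfrac{1}{2-\lambda}\}$, and convert conditions (2) and (3) into ampleness checks against the $(-1)$-curves (your reduction to the one-parameter family $-K_X-sE_8$ with $-1<s<\tfrac13$, and your normalization $p=\ee a$, are just clean repackagings of the paper's Kleiman-criterion computation with $\ee=1$ and varying $a$), arriving at the same system of inequalities and the same interval $\tfrac45<\lambda<\tfrac{10}{9}$. The only nitpick is that Dervan supplies a \emph{lower bound} for the $\alpha$-invariant rather than the exact value you assert at $\lambda=\tfrac45$; since your argument (like the paper's) only uses the lower bound, this is harmless.
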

\begin{proof}
By Kleiman's ampleness criterion, we know that $L_\lambda$ is ample when $\lambda < \frac{4}{3}$. According to Dervan\cite{[Der]}, we have
\[\alpha (\pi [L_\lambda])\geq \min \{1,\frac{1}{2-\lambda}\}. \]
Now we look for the $\lambda$ for which the class $L=aL_\la$ satisfies our conditions for some $a>0$.
The condition on the $\alpha$-invariant gives $$\frac{1}{a}\min \{1,\frac{1}{2-\lambda}\} > \frac{2}{3}.$$
For the ampleness of $L+K$, by computing the intersection number with the above exceptional curves we have
\[a > \frac{1}{4-3\lambda}, \text{ when} \ \lambda \geq 1 ,\]
 and\[ \ a >\frac{1}{\lambda},\text{ when}\ \lambda<1. \]
For the last condition, denote $b=\frac{4-2\lambda}{2-\lambda^2}-a$, we require the divisor $3(1-b)H-\sum_{i=1}^7(1-b)E_i-(1-\lambda b)E_8$ to be ample. This is equivalent to the conditions
\[b < \frac{1}{\lambda}, \text{ when} \ \lambda \geq 1 ,\]
 and \[ b <\frac{1}{4-3\lambda},\text{ when}\ \lambda<1. \]
From these inequalities, now we can conclude that when
 $\frac{4}{5}<\lambda <\frac{10}{9}$, the K-energy is proper on the
 K\"ahler class $3H-\sum_{i=1}^7E_i-\lambda E_8$.\\
\end{proof}

\section{Existence of critical points  by the continuity method }
In this section, we prove Theorem \ref{theo:main4} by the continuity method by
using the estimates of \cite{[SW]}\cite{[W1]} and \cite{[W2]}.

\begin{proof}[Proof of Theorem \ref{theo:main4}]
By the assumption,  we can
choose the reference metric $\chi_0$ as $\chi'$ in (\ref{eq:SW}).
Consider the continuity equation \beq \oo_t\wedge
\chi_t^{n-1}=c_t\chi_t^n, \label{eq:continuity}\eeq where
$$\chi_t=\chi_0+\pbp \varphi_t,\quad \oo_t=(1-t)\chi_0+t\oo. $$
Clearly, $\oo_0=\chi_0$ and $\oo_1=\oo. $ The constant $c_t$ is
given by \beq c_t=\frac {[\oo_t]\cdot
[\chi_0]^{n-1}}{[\chi_0]^n}=(1-t)+c\cdot t, \label{eq:ct}\eeq where
the constant $c$ is given by (\ref{eq:c}).  We denote by $g_t$ the
corresponding metric tensor of the K\"ahler form $\oo_t$. Clearly,
$\varphi=0$ is a solution to the equation (\ref{eq:continuity}) when
$t=0.$ Define the set
$$\cS=\{s\in [0, 1]\;|\; \hbox{the equation (\ref{eq:continuity}) has a solution
 when $t=s$} \}.$$
 
 We first prove the openness of $\cS$. Consider the operator $L_t(\varphi)=\chi_\varphi^{i\bar j}g_{t, i\bar j}: \cU\to C^{\alpha}_0$, where
 $$\cU:=\{\varphi\in C^{2,\alpha}(X;\RR)| \chi_0+\pbp\varphi>0\}/\RR,$$
 and $$C^{\alpha}_0:= C^{\alpha}(X;\RR)/\RR.$$ 
 Here we take the quotient space norm, and the H\"older semi-norms are taken with respect to a fixed metric, say, $\chi_0$.
 For any $t_0\in \cS$,
  the linearization of the operator
$L_t(\varphi)$ at $(t_0,
\varphi_{t_0})$ , $DL|_{(t_0, \varphi_{t_0})}: C^{2,\alpha}_0\to C^\alpha_0$ is given by
$$DL|_{(t_0, \varphi_{t_0})}(f)=-h_{t_0}^{p\bar q}f_{p\bar q},
\quad h_t^{p\bar q}=\chi_t^{i\bar q}\chi_t^{p\bar j}g_{t, i\bar
j}.$$ Here $C^{2,\alpha}_0:=C^{2,\alpha}(X;\RR)/\RR$. It is obvious to see that  $DL|_{(t_0, \varphi_{t_0})} $ (We write $DL$ for short.) is elliptic. By strong maximum principle, its
kernel in $C^{2,\alpha}_0$ is trivial. We claim that $DL$ is a self-adjoint operator in $L^2$. Actually, for any real-valued smooth function $\eta$, we have 
\footnote{We can also prove this by observing that $(DL(f),\eta)_{L^2}=-\int_X \eta <\pbp f, \oo_t>_{\chi_{t_0}} {\chi_{t_0}^n\over n!}=
 -\int_X \eta \pbp f\wedge \star \oo_t$. Here $\star$ is the Hodge star operator associated with $\chi_{t_0}$. Since $\partial $ and $\bar\partial$ commute with $\star$ It is obvious that this equals $-\int_X f \pbp \eta\wedge \star \oo_t= (f,DL(\eta))_{L^2}$. A good reference for Hodge star operator on K\"ahler manifold is \cite{[Wells]}. }
\beqs
(DL(f),\eta)_{L^2}&=& \int_X \eta DL(f) {\chi_{t_0}^n\over n!}=-\int_X \eta \chi_{t_0}^{i\bar q}\chi_{t_0}^{p\bar j}g_{t_0, i\bar
j} f_{p\bar q}  {\chi_{t_0}^n\over n!}\\
&=& \int_X \eta_p \chi_{t_0}^{i\bar q}\chi_{t_0}^{p\bar j}g_{t_0, i\bar
j} f_{\bar q}  {\chi_{t_0}^n\over n!}+\int_X \eta \chi_{t_0}^{i\bar q}\chi_{t_0}^{p\bar j}g_{t_0, i\bar
j, p} f_{\bar q}  {\chi_{t_0}^n\over n!}
\eeqs
Here all the covariant derivatives are taken with respect  to $\chi_{t_0}$. 
Observe that $\chi_{t_0}^{i\bar j}g_{t_0, i\bar
j}=c_{t_0}$, so $\chi_{t_0}^{i\bar j}g_{t_0, i\bar
j,p}=0$. Since both $\oo_{t_0}$ and $\chi_{t_0}$ are K\"ahler, we have $g_{t_0, i\bar
j,p}=g_{t_0, p\bar j,i}$ by comparing the expressions in local coordinates. So $\chi_{t_0}^{p\bar j}g_{t_0, i\bar
j, p}=\chi_{t_0}^{p\bar j}g_{t_0, p\bar
j, i}=0$. We have
$$(DL(f),\eta)_{L^2}=\int_X \eta_p \chi_{t_0}^{i\bar q}\chi_{t_0}^{p\bar j}g_{t_0, i\bar
j} f_{\bar q}  {\chi_{t_0}^n\over n!}.$$
Then another integration by parts gives the result: $(DL(f),\eta)_{L^2}=(f,DL(\eta))_{L^2}$.

 Now we conclude that the kernel of the adjoint of $DL$ is also trivial (modulo constant functions), and hence $DL|_{(t_0,\varphi_{t_0})}$ is invertible. By standard inverse function theorem, the set  $\cS$ is open. 
 
 Since $0\in \cS$, by the
  openness there exists $t_1>0$ such that (\ref{eq:continuity}) has
  a smooth solution for $t\in [0, t_1).$ By Lemma \ref{lem:2order}
  and Lemma \ref{lem:zero} below,  for any $t\in \cS$ the metric $\chi_t$ is uniformly equivalent
  to $\chi_0$. By the elliptic estimates in \cite{[GT]} we can get $C^{2, \al}(X, \oo)$ estimates of
   $\varphi_t$. \footnote{The adaptation of the classical Evans-Krylov theorem to the complex case has been carried out by Siu Y.-T. in \cite{[Siu]} P100-107. }
    This together with the classical Schauder
  estimates can improve the regularity to $C^{\infty}.$ Thus, $\cS$
  is closed and the   theorem is proved.

\end{proof}

\begin{lem}\label{lem:2order}Fix some $t_2\in (0, t_1).$ Let $\varphi_t$ be
a solution of (\ref{eq:continuity}) with $t\geq t_2>0$.
If the inequality (\ref{eq:SW}) holds, then there exist
constants $A, \la,  C>0$ independent of $t$ such that
$$\La_{\oo}\chi_t\leq \la \La_{\oo_t}\chi_t\leq \la C  e^{A(\varphi_t-\inf_X \varphi_t)},
\quad \forall\; t\geq t_2.$$

\end{lem}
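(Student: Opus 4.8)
The plan is to prove the two inequalities separately. The first is an elementary consequence of the uniform comparability of the reference metrics, while the second is the genuine a priori Laplacian estimate, which I would establish by the maximum principle following the scheme of Weinkove and Song-Weinkove for the $J$-flow, taking care that every constant is independent of $t$.

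For the first inequality, note that $\oo_t=(1-t)\chi_0+t\oo$ is, for every $t\in[0,1]$, a convex combination of the two \emph{fixed} K\"ahler forms $\chi_0$ and $\oo$. By compactness of $X$ there is $\La_0\geq 1$ with $\La_0^{-1}\oo\leq\chi_0\leq\La_0\oo$, whence $\La_0^{-1}\oo\leq\oo_t\leq\La_0\oo$ for all $t$. From $\oo_t\leq\La_0\oo$ one gets the reverse inequality for inverses, $\oo^{-1}\leq\La_0\,\oo_t^{-1}$, and contracting the positive-semidefinite matrix $\La_0\oo_t^{-1}-\oo^{-1}$ against $\chi_t>0$ yields $\La_\oo\chi_t\leq\La_0\,\La_{\oo_t}\chi_t$. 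Thus the first inequality holds with $\la:=\La_0$, a constant independent of $t$.

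For the second inequality I would apply the maximum principle to $u:=\log\La_{\oo_t}\chi_t-A\varphi_t$, with $A>0$ large to be fixed. The relevant elliptic operator is the linearization $\tilde\Delta f:=h_t^{p\bar q}f_{p\bar q}$ (covariant derivatives taken with respect to $\oo_t$), whose coefficient matrix $h_t^{p\bar q}=\chi_t^{i\bar q}\chi_t^{p\bar j}g_{t,i\bar j}$ is positive definite, so $\tilde\Delta u\leq 0$ at an interior maximum $x_0$. Two inputs feed the estimate. First, the equation in the form $h_t^{p\bar q}\chi_{t,p\bar q}=nc_t$ gives $\tilde\Delta\varphi_t=nc_t-h_t^{p\bar q}\chi_{0,p\bar q}$, so that $-A\tilde\Delta\varphi_t=A\big(h_t^{p\bar q}\chi_{0,p\bar q}-nc_t\big)=A\,h_t^{p\bar q}(\chi_0-\chi_t)_{p\bar q}$. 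Second, the Aubin-Yau computation of $\tilde\Delta\log\La_{\oo_t}\chi_t$ produces a curvature term bounded below by $-C\,\tr\,h_t$, where $C$ depends only on an upper bound for the curvature of $\oo_t$ (uniform in $t\in[t_2,1]$ by compactness), together with third-order terms. The decisive input is the cone condition, which holds uniformly along the path: since
\[(nc_t\chi_0-(n-1)\oo_t)\wedge\chi_0^{n-2}=(1-t)\,\chi_0^{n-1}+t\,(nc\chi_0-(n-1)\oo)\wedge\chi_0^{n-2},\]
both summands are nonnegative and the second is strictly positive by (\ref{eq:SW}), so there is a gap $\ee_0>0$, independent of $t$, witnessing that $\chi_0$ is a strict subsolution at every parameter.

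Assembling these at $x_0$: the subsolution property, in the quantitative form (equivalent to the cone condition) that whenever the largest eigenvalue of $\chi_t$ relative to $\oo_t$ is large one has $h_t^{p\bar q}(\chi_0-\chi_t)_{p\bar q}\geq\ka(1+\tr\,h_t)$ for a uniform $\ka=\ka(\ee_0)>0$, makes $-A\tilde\Delta\varphi_t$ a large positive term. Choosing $A$ large (depending on $\La_0$, the curvature bound and $\ee_0$, but not on $t$) so that $A\ka$ dominates $C$, the inequality $0\geq\tilde\Delta u$ becomes impossible unless $\La_{\oo_t}\chi_t(x_0)\leq C$; propagating from the maximum point gives $\La_{\oo_t}\chi_t\leq Ce^{A(\varphi_t-\inf_X\varphi_t)}$ on $X$. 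The main obstacle is exactly this last mechanism: one must control the third-order terms in $\tilde\Delta\log\La_{\oo_t}\chi_t$, showing that the negative gradient term $-|\partial\La_{\oo_t}\chi_t|_{h_t}^2/(\La_{\oo_t}\chi_t)^2$ from differentiating the logarithm is absorbed by the good third-order terms coming from the concavity of $\chi\mapsto h^{p\bar q}\chi_{p\bar q}$, a Cauchy-Schwarz argument in which the uniform subsolution gap $\ee_0$ supplies the definite sign. Verifying that $\ee_0$, the curvature constant and $c_t$ are all uniform—which is why one restricts to the compact parameter range $t\geq t_2$—is what makes the resulting constants $A,\la,C$ independent of $t$.
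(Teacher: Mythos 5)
Your proposal is correct and follows essentially the same route as the paper: the first inequality via the uniform equivalence $\oo_t\leq\la\oo$, and the second via the maximum principle applied to $\log\La_{\oo_t}\chi_t-A\varphi_t$, with the third-order terms absorbed by Weinkove's Cauchy--Schwarz inequality and with the key identity $nc_t\chi_0-(n-1)\oo_t=(1-t)\chi_0+t\,(nc\chi_0-(n-1)\oo)$ supplying the uniform cone gap for $t\geq t_2$. Your quantitative ``subsolution'' phrasing of the cone condition (that a large eigenvalue of $\chi_t$ forces $h_t^{p\bar q}(\chi_0-\chi_t)_{p\bar q}\geq\ka(1+\mathrm{tr}\,h_t)$) is just a repackaging, valid along solutions of the equation, of the explicit eigenvalue computation the paper carries out following Song--Weinkove.
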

\begin{proof}
Here we follow the estimates in \cite{[SW]} and \cite{[W1]}. Sometimes we omit the
subscript $t$ for simplicity. We choose $\chi_0$ to be the $\chi'$ in (\ref{eq:SW}). Write
$$\td \Delta f=\frac 1n\chi^{k\bar j}\chi^{i\bar l}g_{t, i\bar j}\p_k\p_{\bar l}f=
\frac 1nh^{k\bar l}\p_k\p_{\bar l}f.$$ We calculate \beq \td \Delta
(\La_{\oo_t}\chi)=\frac 1n h^{k\bar l}R_{k\bar l} ^{\quad  i\bar j}
(g_t)\chi_{i\bar j}+\frac 1n h^{k\bar l}g_t^{i\bar j}\p_k\p_{\bar
l}\chi_{i\bar j}, \eeq where $R_{k\bar l} ^{\quad  i\bar j}
(g_t)$ denotes the curvature tensor of $g_t$.  Note that by equation
(\ref{eq:continuity}),\beqs
0&=&-g_t^{i\bar j}\p_i\p_{\bar j}(\chi^{k\bar l}g_{t, k\bar l})\\
&=&g_t^{i\bar j}h^{p\bar q}\p_i\p_{\bar j}\chi_{p\bar q}-g_t^{i\bar
j}h^{r\bar q}\chi^{p\bar s}\p_i \chi_{r\bar s}\p_{\bar
j}\chi_{p\bar q}-g_t^{i\bar j}h^{p\bar s}\chi^{r\bar q}\p_i
\chi_{r\bar s}\p_{\bar j}\chi_{p\bar q}+\chi^{k\bar l}R_{k\bar
l}(g_t). \eeqs
 Therefore, we have
\beqs \td \Delta \log (\La_{\oo_t}\chi)&=&\frac {\td \Delta
(\La_{\oo_t}\chi)}{\La_{\oo_t}\chi}
-\frac {|\td \Na(\La_{\oo_t}\chi) |^2}{(\La_{\oo_t}\chi)^2}\\
&= &\frac 1{n\La_{\oo_t}\chi}\Big(h^{k\bar l}R_{k\bar l} ^{\quad
i\bar j}(g_t)\chi_{i\bar j}+g_t^{i\bar j}h^{r\bar q}\chi^{p\bar
s}\p_i \chi_{r\bar s}\p_{\bar j}\chi_{p\bar q}+g_t^{i\bar j}h^{p\bar
s}\chi^{r\bar q}\p_i \chi_{r\bar s}\p_{\bar j}\chi_{p\bar
q}\\&&-\chi^{k\bar l}R_{k\bar l}(g_t)-n\frac {|\td
\Na(\La_{\oo_t}\chi)
|^2}{ \La_{\oo_t}\chi }\Big)\\
&\geq&\frac 1{n\La_{\oo_t}\chi}\Big(h^{k\bar l}R_{k\bar l} ^{\quad
i\bar j}(g_t)\chi_{i\bar j}-\chi^{k\bar l}R_{k\bar l}(g_t)\Big),
\eeqs where we used the inequality by Lemma 3.2 in \cite{[W1]} \beq n|\td \Na(\La_{\oo_t}\chi)
|^2 \leq (\La_{\oo_t}\chi) g_t^{i\bar j}h^{r\bar q}\chi^{p\bar
s}\p_i \chi_{r\bar s}  \p_{\bar j}\chi_{p\bar q}. \eeq
 For any $A$, we have
 \beqs \td
\Delta \Big(\log (\La_{\oo_t}\chi)-A\varphi\Big)&\geq&\frac
1{n\La_{\oo_t}\chi}\Big(h^{k\bar l}R_{k\bar l} ^{\quad i\bar
j}(g_t)\chi_{i\bar j}-\chi^{k\bar l}R_{k\bar l}(g_t)\Big)-\frac
{1}{n}(nc_tA-A h^{k\bar l}  \chi_{0, k\bar l}). \eeqs Fix $t_0\in
[0, 1]$. We choose $A$ large enough such that \beq -\frac
1{A\La_{\oo_t}\chi}\Big(h^{k\bar l}R_{k\bar l} ^{\quad i\bar
j}(g_t)\chi_{i\bar j}-\chi^{k\bar l}R_{k\bar l}(g_t)\Big)\leq \ee,
\eeq where we used the fact that
$$\La_{\oo_t}\chi\geq C>0$$
for some constant $C$ independent of $t$. We assume that $\log
(\La_{\oo_t}\chi)-A\varphi$ achieves its maximum at the point $(x_t,
t).$ Then at the point $(x_t, t)$ we have \beqs
0&\geq& \td \Delta \Big(\log (\La_{\oo_t}\chi)-A\varphi\Big) \\
&\geq&\frac 1{n\La_{\oo_t}\chi}\Big(h^{k\bar l}R_{k\bar l} ^{\quad
i\bar j}(g_t)\chi_{i\bar j}-\chi^{k\bar l}R_{k\bar
l}(g_t)\Big)-\frac
{1}{n}(nc_tA-A h^{k\bar l} \chi_{0, k\bar l})\\
&\geq&\frac A{n}\Big(-\ee-  nc_t + h^{k\bar l} \chi_{0, k\bar
l}\Big). \eeqs Therefore, at the point $(x_t, t)$ we have \beqs
h^{k\bar l} \chi_{0, k\bar l} -nc_t \leq \ee. \eeqs We choose normal
coordinates for the metric $ \chi_{0}$ so that the metric $\chi_t$
is diagonal with entries $\la_1, \cdots, \la_n.$ We denote the
diagonal entries of $g_t$ by $\mu_1, \cdots, \mu_n. $ Thus, we have
$$ \sum_{i=1}^n\frac {\mu_i}{\la_i^2}-n c_t\leq \ee, $$
which implies the inequality \beqn \ee&\geq&\sum_{i=1, i\neq k}^n\,
\mu_i\Big(\frac 1{\la_i}-1\Big)^2-\sum_{i=1, i\neq k}^n\,
\mu_i +\frac {\mu_k}{\la_k^2}-2\frac {\mu_k}{\la_k}+nc_t\nonumber\\
&\geq & nc_t-\sum_{i=1, i\neq k}^n\, \mu_i-2\frac
{\mu_k}{\la_k}.\label{eq:mu} \eeqn
 On the other hand, by the assumption and the choice of the metric
 $\chi_0$ we have
\beq (nc \chi_0-(n-1)\oo)\wedge \chi_0^{n-2}\wedge
\bb_k>B\ee \chi_0^{n-1}\wedge \bb_k \label{eq:ct1} \eeq for
sufficiently small $\ee>0.$ Here   $\bb_k$ denotes the $(1, 1)$ form
$\i dz^k\wedge d\bar z^k$ and we choose the constant $B$ such that $
Bt_2>1. $
 Combining (\ref{eq:ct1})
with (\ref{eq:ct}), we have \beqn &&(nc_t
\chi_0-(n-1)\oo_t)\wedge \chi_0^{n-2}\wedge \bb_k \nonumber\\
&=&(1-t) \chi_0^{n-1}\wedge \bb_k+ t\Big(nc
\chi_0-(n-1)\oo\Big)\wedge \chi_0^{n-2}\wedge \bb_k\nonumber\\&\geq&
B\ee\, t\chi_0^{n-1}\wedge \bb_k. \label{eq:ct2}\eeqn
By the argument of \cite{[SW]},   the inequality (\ref{eq:ct2}) implies
that \beq \sum_{i=1, i\neq k}^n\, \mu_i<nc_t-Bt\ee. \label{eq:mu2}
\eeq Combining (\ref{eq:mu2}) with (\ref{eq:mu}), we have
$$\frac {\la_k}{\mu_k}<\frac 2{(Bt-1)\ee}\leq \frac 2{(Bt_2-1)\ee},
\quad t\geq t_2$$ for $k=1, \cdots, n.$ Hence, at the point $(x_t,
t)$ we have the estimate
$$\La_{\oo_t}\chi\leq \frac {2n}{(Bt_2-1)\ee}, \quad t\geq t_2.$$
Thus, for any $x\in X$ we have
$$\La_{\oo_t}\chi\leq \frac {2n}{(Bt_2-1)\ee} e^{A(\varphi-\inf_X\varphi)}.$$
Since $\oo_t\leq \chi_0+\oo\leq \la \oo$ and $\chi $ is a positive
$(1, 1)$ form, we have
$$\La_{\oo}\chi\leq \frac {2n\la }{(Bt_2-1)\ee} e^{A(\varphi-\inf_X\varphi)}. $$
The lemma is proved.

\end{proof}

\begin{lem}\label{lem:zero} Under the assumptions of Lemma \ref{lem:2order},
there exists a uniform constant $C$ such that
$$\osc_X\varphi_t\leq C,\quad \forall\;t\geq t_2. $$

\end{lem}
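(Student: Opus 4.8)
The plan is to establish the oscillation bound by a Moser iteration adapted to equation (\ref{eq:continuity}), following the scheme of Weinkove \cite{[W1]} and Song--Weinkove \cite{[SW]}. Throughout I normalize $\sup_X\varphi_t=0$, so that $\osc_X\varphi_t=-\inf_X\varphi_t$, and I use that the cohomological constraint gives the fixed total mass $\int_X\chi_t^n=\int_X\chi_0^n$, and more generally that all mixed masses $\int_X\oo_t\wedge\chi_t^{n-1}=[\oo_t]\cdot[\chi_0]^{n-1}=c_t[\chi_0]^n$ are fixed for every $t$. The first step is to extract from the equation the uniform non-degeneracy of $\chi_t$. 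Writing (\ref{eq:continuity}) as the trace identity $\La_{\chi_t}\oo_t=nc_t$ and diagonalizing $\oo$ and $\chi_t$ simultaneously, the fact that $\oo_t=(1-t)\chi_0+t\oo\geq\delta\oo$ for a uniform $\delta>0$, together with the uniform bound on $c_t$ from (\ref{eq:ct}), forces every eigenvalue of $\chi_t$ relative to $\oo$ to be bounded below; hence $\chi_t\geq c'\oo$ and, by the arithmetic--geometric mean inequality, $\chi_t^n\geq c_0\oo^n$, with $c',c_0>0$ independent of $t\geq t_2$. This is exactly the lower bound $\La_{\oo_t}\chi_t\geq C>0$ already invoked in the proof of Lemma \ref{lem:2order}.

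The oscillation estimate then splits into two parts. Since $\chi_t>0$ gives $\Delta_\oo\varphi_t\geq-\La_\oo\chi_0\geq-C$, the normalized Green's function of $(X,\oo)$, which is bounded below, immediately controls the supremum by the average, $\sup_X\varphi_t-\bar\varphi_t\leq C$, where $\bar\varphi_t=\big(\int_X\varphi_t\,\oo^n\big)/\big(\int_X\oo^n\big)$. The substantive part is to bound $-\inf_X\varphi_t$. For this I would set $u=-\varphi_t\geq0$, use the telescoping identity $\chi_0^n-\chi_t^n=\pbp u\wedge T_t$ with $T_t=\sum_{j=0}^{n-1}\chi_0^{\,n-1-j}\wedge\chi_t^{\,j}$, test against powers $u^p$, and integrate by parts. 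Each integration produces a coercive gradient term $\tfrac{p}{2}\int_X u^{p-1}\,\sqrt{-1}\,\partial u\wedge\bar\partial u\wedge T_t$, which by $T_t\geq\chi_0^{\,n-1}\geq c\,\oo^{n-1}$ dominates $\int_X|\nabla u^{(p+1)/2}|_\oo^2\,\oo^n$; feeding this into the Sobolev inequality on $(X,\oo)$ and iterating $p\to\infty$ upgrades an $L^1$ bound into the desired $L^\infty$ bound on $u$, the base level being furnished by the fixed mass and the Green's function bound of the first part.

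The main obstacle is closing this iteration, and it is genuine: the equation provides only a lower bound on $\chi_t^n$, never an upper bound, so one cannot invoke the Yau or Kolodziej $L^\infty$ estimate (which requires the density $\chi_t^n/\oo^n$ to lie in some $L^p$), and the exponential factor $e^{A(\varphi_t-\inf_X\varphi_t)}$ of Lemma \ref{lem:2order} is too strong to be fed back without reintroducing the very oscillation one is estimating. The test-function computation above leaves the uncontrolled integral $\int_X u^p\chi_t^n$, and the key device is to rewrite it, using the equation in the form $\oo_t\wedge\chi_t^{n-1}=c_t\chi_t^n$, as $c_t^{-1}\int_X u^p\,\oo_t\wedge\chi_t^{n-1}$, which is one power lower in $\chi_t$ and whose total mass is cohomologically fixed. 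Iterating this reduction on the number of factors of $\chi_t$, while absorbing the successive gradient terms via $\chi_t\geq c'\oo$, is what allows the constants to be controlled purely by the fixed mixed masses rather than by any pointwise size of $\chi_t$. Finally, the restriction $t\geq t_2>0$ is what renders every constant uniform in $t$, exactly as in the passage from (\ref{eq:ct2}) to (\ref{eq:mu2}) in the proof of Lemma \ref{lem:2order}.
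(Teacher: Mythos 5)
Your preparatory steps are sound: since (\ref{eq:continuity}) says $\La_{\chi_t}\oo_t=nc_t$, every eigenvalue of $\oo_t$ with respect to $\chi_t$ is less than $nc_t$, so $\chi_t\geq \frac{1}{nc_t}\oo_t\geq c'\oo$ uniformly, and the Green's function bound on $\sup_X\varphi_t$ minus the average is standard. The genuine gap is in your ``key device,'' and it is fatal when $n\geq 3$. Set $u=-\varphi_t\geq 0$, $T_t=\sum_{j=0}^{n-1}\chi_0^{n-1-j}\wedge\chi_t^{j}$ and $S_t=\sum_{j=0}^{n-2}\chi_0^{n-2-j}\wedge\chi_t^{j}$. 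Your test-function identity is
\beq
\frac p2\int_X u^{p-1}\sqrt{-1}\,\partial u\wedge\bar\partial u\wedge T_t=\int_X u^p\chi_t^n-\int_X u^p\chi_0^n,\nonumber
\eeq
and substituting $\chi_t^n=c_t^{-1}\oo_t\wedge\chi_t^{n-1}$, telescoping $\chi_t^{n-1}=\chi_0^{n-1}-\pbp u\wedge S_t$ and integrating by parts once more yields
\beq
\frac p2\int_X u^{p-1}\sqrt{-1}\,\partial u\wedge\bar\partial u\wedge\Big(T_t-\frac 1{c_t}\oo_t\wedge S_t\Big)=\frac 1{c_t}\int_X u^p\,\oo_t\wedge\chi_0^{n-1}-\int_X u^p\chi_0^n\leq C\int_X u^p\oo^n.\nonumber
\eeq
Note the sign: the gradient term generated by the substitution enters with the \emph{bad} sign and must be absorbed by the coercive term, so the iteration closes only if $T_t-\frac 1{c_t}\oo_t\wedge S_t=\chi_t^{n-1}+\frac t{c_t}(c\chi_0-\oo)\wedge S_t$ dominates $\ee_0\oo^{n-1}$ against $\sqrt{-1}\partial u\wedge\bar\partial u\geq 0$. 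Under (\ref{eq:SW}) this is unavailable for $n\geq 3$: the hypothesis is only the wedge condition $(nc\chi_0-(n-1)\oo)\wedge\chi_0^{n-2}>0$ and does not make $c\chi_0-\oo$, nor even $nc\chi_0-(n-1)\oo$, pointwise nonnegative; $S_t$ contains $\chi_t^{n-2}$, for which no upper bound exists at this stage; and pairing $\chi_t^{n-1}$ against that worst term, using the only pointwise information available ($\oo_t\leq nc_t\chi_t$, which is off by the factor $n$), gives the lower bound $\frac 1{nc_t}\big((1-t)\chi_0+t(nc\chi_0-(n-1)\oo)\big)\wedge\chi_t^{n-2}$, whose positivity is exactly the pointwise cone condition you do not have. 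Your bound $\chi_t\geq c'\oo$ with a small fixed $c'$ cannot absorb a term of size $\chi_t^{n-2}$ multiplying a fixed form with negative directions. (In complex dimension $2$ your scheme does close, since then $S_t=1$ and $\chi_t+\chi_0-c_t^{-1}\oo_t\geq\frac 1{2c_t}\big((1-t)\chi_0+t(2c\chi_0-\oo)\big)>0$; but the lemma is needed in all dimensions.)

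Moreover, the premise by which you rejected the paper's route is a misconception, and it is precisely this feedback that the paper (following \cite{[W1]}, \cite{[W2]}, \cite{[SW]}) uses. Lemma \ref{lem:2order} can be fed back without circularity: normalize $\int_X\varphi_t\,\oo_t^n=0$, set $\hat\varphi=\varphi_t-\sup_X\varphi_t$ and $u=e^{-N\hat\varphi}$ with $N=A/(1-\dd)$, $A$ the constant of Lemma \ref{lem:2order}. Since $\varphi_t-\inf_X\varphi_t=\hat\varphi+\osc_X\varphi_t$ and $\|u\|_{C^0}=e^{N\osc_X\varphi_t}$, the exponential factor becomes $e^{A(\varphi_t-\inf_X\varphi_t)}=u^{-(1-\dd)}\,\|u\|_{C^0}^{1-\dd}$, i.e.\ a negative power of $u$ (harmless inside the integrals) times a \emph{sublinear} power of the unknown. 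This produces, uniformly for $t\geq t_2$,
\beq
\int_X|\Na u^{p/2}|^2_{\oo_t}\,\frac{\oo_t^n}{n!}\leq Cp\,\|u\|_{C^0}^{1-\dd}\int_X u^{p-(1-\dd)}\,\frac{\oo_t^n}{n!},\nonumber
\eeq
and Moser iteration with the uniformly bounded Sobolev constants of $\oo_t$, together with the normalization, bounds $\|u\|_{C^0}$ by a controlled quantity times $\|u\|_{C^0}^{1-\dd}$; because the exponent is strictly less than $1$, this self-improves to a genuine bound on $\|u\|_{C^0}$, hence on $\osc_X\varphi_t$. So rather than circumventing Lemma \ref{lem:2order}, your proof should consume it: without it (or without a pointwise cone condition), no direct power-law iteration of the type you propose is known to close under (\ref{eq:SW}), which is exactly why the exponential test function was introduced in \cite{[W1]} and \cite{[SW]}.
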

\begin{proof}The argument follows directly from \cite{[SW2]} and \cite{[SW]}
and we sketch the details here. We normalize $\varphi_t$ by
\beq
\int_X\;\varphi_t\,\oo_t^n=0.
\eeq
Therefore, we have $\sup_X\;\varphi_t\geq 0$ and $\inf_X\;\varphi_t\leq
0.$
Define
$$u=e^{-N\hat \varphi},\quad \hat \varphi=\varphi-\sup_X\varphi,$$
where $N=\frac A{1-\dd}.$ Here $A$ is the constant in Lemma \ref{lem:2order}
and $\dd$ is a small positive constant to be determined later. Using
the argument in \cite{[W1]}, there is a constant $C$ independent of $t$ such that for
all $p\geq 1$ and $t\geq t_2$ we have
$$\int_X\; |\Na u^{\frac p2}|_{\oo_t}^2\,\frac {\oo_t^n}{n!}\leq
C p\,\|u\|_{C^0}^{1-\dd}\,\int_X\, u^{p-(1-\dd)}\frac {\oo_t^n}{n!}.$$
 Note that the Sobolev constant of $\oo_t$ is uniformly
bounded for all $t\in [0, 1]$. Thus, following the argument of \cite{[W2]} we can prove that $u$ is bounded
and hence $\inf_X\hat \varphi_t\geq -C$ for some constant $C.$ In
other words, we have
$$0\geq \inf_X\varphi_t\geq \sup_X\;\varphi_t-C\geq -C,$$
which implies that $\osc_X\varphi_t\leq C. $ The lemma is proved.
\end{proof}


\begin{thebibliography}{10}

\bibitem{[ChSh]} Cheltsov, I. and  Shramov,K.,
 {\it Log canonical thresholds of smooth Fano
threefolds}. Russian Mathematical Surveys, 63 (2008), 859--958.

\bibitem{[Chen1]} Chen, X. X.
{\it On the lower bound of the Mabuchi energy and its application}.
Internat. Math. Res. Notices 2000, no. 12, 607--623.

\bibitem{[ChHe]} Chen, X. X. and He, W.,
{\it The Calabi flow on K\"ahler Surfaces with bounded Sobolev constant (I)}.
Math. Ann. (2012) 354:227--261.

\bibitem{[CT]} Chen, X. X. and
 Tian, G.
{\it Geometry of K\"ahler metrics and foliations by holomorphic discs}.
Publ. Math. Inst. Hautes \'etudes Sci. No. 107 (2008), 1--107.

\bibitem{[Dem]} Demailly, J.-P.,
{\it On Tian's invariant and log canonical
thresholds. Appendix of Cheltsov and Shramov's paper {" Log
canonical thresholds of smooth Fano threefolds"}}, Russian
Mathematical Surveys, 63 (2008), 945--950.


\bibitem{[Der]} Dervan, R.,
{\it Alpha invariants and K-stability for general polarisations of Fano varieties}.
arXiv:1307.6527.

\bibitem{[Don1]}Donaldson, S. K. {\it Moment maps and diffeomorphisms}.
 Asian J. Math. 3 (1999), no. 1, 1--15.
 
\bibitem{[Don2]} Donaldson, S.K. {\it Scalar curvature and stability of toric varieties}. J. Differential Geometry, 62(2002), no.2, 289-349.

\bibitem{[FLM]}Fang, H., Lai, X., Ma X. {\it
 On a class of fully nonlinear flows in K\"ahler geometry}.
 J. Reine Angew. Math. 653 (2011), 189--220.

\bibitem{[FLSW]}Fang, H., Lai, X., Song, J., Weinkove, B.
{\it The J-flow on K\"ahler surfaces: a boundary
case}.  arXiv:1204.4068.

\bibitem{[Ful]} Fulton, W.
Introduction to toric varieties.  Princeton University Press, 1993.

\bibitem{[GT]}Gilbarg, D., Trudinger, N. S.,
 Elliptic partial differential equations of second order.
 Reprint of the 1998 edition. Classics in Mathematics. Springer-Verlag, Berlin, 2001.


\bibitem{[GL1]}
Guan, B., Li, Q., {\it A Monge-Amp\'ere type fully nonlinear
equation on Hermitian manifolds}. Discrete Contin. Dyn. Syst. Ser. B 17 (2012),
 no. 6, 1991--1999.

\bibitem{[GL2]}
Guan, B., Li, Q., {\it The Dirichlet problem for a complex Monge-Amp\'ere
 type equation on Hermitian manifolds}. Adv. Math. 246 (2013), 351--367.

\bibitem{[GS]} Guan, B., Sun, W.,
 {\it On a class of fully nonlinear elliptic equations on Hermitian manifolds}.
 arXiv:1301.5863.

\bibitem{[LS]} Lejmi, X., Sz\'ekelyhidi, G. {\it The J-flow and
stability}.  arXiv:1309.2821.

\bibitem{[Paul1]}Paul, S., {\it
 Hyperdiscriminant polytopes, Chow polytopes, and Mabuchi energy asymptotics}.
  Ann. of Math. (2) 175 (2012), no. 1, 255--296.

\bibitem{[Paul2]} Paul, S., {\it A numerical criterion for K-energy maps of algebraic
 manifolds}. arXiv:1210.0924.

\bibitem{[Paul3]}Paul, S., {\it Stable pairs and coercive estimates for the Mabuchi
functional}.  arXiv:1308.4377.


\bibitem{[Ross1]}Ross, J. {\it Unstable products of smooth curves}.
Invent. Math. 165 (2006), no. 1, 153--162.
\bibitem{[Ross]}Ross, J.  Priviate communication.

\bibitem{[Siu]} Siu, Y.-T. {\it Lectures on Hermitian-Einstein metrics for stable bundles and K\"ahler-Einstein metrics}. DMV Seminar, Bd. 8, Birkh\"auser Verlag Basel, 1987.

\bibitem{[So]} Song, J.
{\it The $\alpha$-invariant on toric Fano manifolds}.
 Amer. J. Math. 127 (2005), no. 6, 1247--1259.

\bibitem{[SW]}Song, J., Weinkove, B. {\it
 On the convergence and singularities of the J-flow with applications
to the Mabuchi energy}. Comm. Pure Appl. Math. 61 (2008), no. 2, 210--229.
\bibitem{[SW2]}Song, J., Weinkove, B. {\it The degenerate
J-flow and the Mabuchi energy on minimal surfaces of general type}.
arXiv:1309.3810.

\bibitem{[S]} Sun, W., {\it On a class of fully nonlinear elliptic equations on closed Hermitian manifolds},
arXiv: 1310.0363.

\bibitem{[Tian]}
Gang Tian.
\newblock {\em Canonical metrics in {K}{\"a}hler geometry}.
\newblock Lectures in Mathematics ETH Z{\"u}rich. Birkh{\"a}user Verlag, Basel,
  2000.
\newblock Notes taken by Meike Akveld.

\bibitem{[Wang1]} Wang, L., {\it
 On the regularity theory of fully nonlinear parabolic equations. I}. Comm. Pure Appl.
Math. 45 (1992), no. 1, 27--76.
\bibitem{[Wang2]} Wang, L., {\it On the regularity theory of fully nonlinear parabolic equations. II}.
Comm. Pure Appl. Math. 45 (1992), no. 2, 141--178.

\bibitem{[W1]} Weinkove, B. {\it Convergence
of the J-flow on K\"ahler surfaces}. Comm. Anal. Geom. 12 (2004), no. 4, 949--965.
\bibitem{[W2]}Weinkove, B. {\it On the J-flow in higher dimensions and the lower
boundedness of the Mabuchi energy}. J. Differential Geom. 73 (2006), no. 2,
351--358.

\bibitem{[Wells]} Wells, R.O. {\it Differential analysis on complex manifolds}, GTM 65, Springer,1980.

\bibitem{[ZhZh]} Zhou, B. and Zhu, X.
{\it Relative K-stability and modified K-energy on toric manifolds}.
Advances in Math. 219 (2008), 1327--1362.


\end{thebibliography}
\end{document}